\providecommand{\U}[1]{\protect\rule{.1in}{.1in}}
\numberwithin{equation}{section}
\newtheorem{theorem}{Theorem}[section]
\newtheorem{corollary}{Corollary}[section]
\newtheorem{lemma}{Lemma}[section]
\newtheorem{proposition}{Proposition}[section]
\newtheorem{remark}{Remark}[section]
\newtheorem{definition}{Definition}[section]
\numberwithin{equation}{section}
\newcommand{\bbr}{\mathbb{R}}
\newcommand{\ve}{\varepsilon}
\newcommand{\bd}{\begin{definition}}
\newcommand{\ed}{\end{definition}}
\newcommand{\br}{\begin{remark}}
\newcommand{\er}{\end{remark}}
\newcommand{\be}{\begin{equation}}
\newcommand{\ee}{\end{equation}}
\newcommand{\bc}{\begin{corollary}}
\newcommand{\ec}{\end{corollary}}
\begin{document}

\title[Schr\"odinger equations]{On some nonlinear Schr\"odinger equations in $\bbr^N$}

\author[J. Wei]{Juncheng Wei}
\address{\noindent Department of Mathematics, University of British Columbia,
Vancouver, B.C., Canada, V6T 1Z2}
\email{jcwei@math.ubc.ca}

\author[Y.Wu]{Yuanze Wu}
\address{\noindent  School of Mathematics, China
University of Mining and Technology, Xuzhou, 221116, P.R. China }
\email{wuyz850306@cumt.edu.cn}

\begin{abstract}
In this paper, we consider the following nonlinear Schr\"{o}dinger equations with the critical Sobolev exponent and mixed nonlinearities:
\begin{eqnarray*}
\left\{\aligned
&-\Delta u+\lambda u=t|u|^{q-2}u+|u|^{2^*-2}u\quad\text{in }\mathbb{R}^N,\\
&u\in H^1(\bbr^N),
\endaligned\right.
\end{eqnarray*}
where $N\geq3$, $t>0$, $\lambda>0$ and $2<q<2^*=\frac{2N}{N-2}$.  Based on our recent study on the normalized solutions of the above equation in \cite{WW21}, we prove that
\begin{enumerate}
\item[$(1)$]\quad the above equation has two positive radial solutions for $N=3$, $2<q<4$ and $t>0$ sufficiently large, which gives a rigorous proof of the numerical conjecture in \cite{DPG13};
\item[$(2)$]\quad there exists $t_q^*>0$ for $2<q\leq4$ such that the above equation has ground-states for $t\geq t_q^*$ in the case of $2<q<4$ and for $t>t_4^*$ in the case of $q=4$ while, the above equation has no ground-states for $0<t<t_q^*$ for all $2<q\leq4$, which, together with the well-known results on ground-states of the above equation, almost completely solve the existence of ground-states to the above equation, except for $N=3$, $q=4$ and $t=t_4^*$.
\end{enumerate}
Moreover, based on the almost completed study on ground-states to the above equation, we introduce a new argument to study the normalized solutions of the above equation to prove that there exists $0<\overline{t}_{a,q}<+\infty$ for $2<q<2+\frac{4}{N}$ such that the above equation has no positive normalized solutions for $t>\overline{t}_{a,q}$ with $\int_{\bbr^N}|u|^2dx=a^2$, which, together with our recent study in \cite{WW21}, gives a completed answer to the open question proposed by Soave in \cite{S20}.  Finally, as applications of our new argument, we also study the following Schr\"{o}dinger equation with a partial confinement:
\begin{eqnarray*}
\left\{\aligned
&-\Delta u+\lambda u+(x_1^2+x_2^2)u=|u|^{p-2}u\quad\text{in }\mathbb{R}^3,\\
&u\in H^1(\bbr^3),\quad \int_{\bbr^3}|u|^2dx=r^2,
\endaligned\right.
\end{eqnarray*}
where $x=(x_1,x_2,x_3)\in\bbr^3$, $\frac{10}{3}<p<6$, $r>0$ is a constant and $(u, \lambda)$ is a pair of unknowns with $\lambda$ being a Lagrange multiplier.  We prove that the above equation has a second positive solution, which is also a mountain-pass solution, for $r>0$ sufficiently small.  This gives a positive answer to the open question proposed by Bellazzini et al. in \cite{BBJV17}.

\vspace{3mm} \noindent{\bf Keywords:} Normalized solution; Ground state; Schr\"odinger equation; Power-type nonlinearity.

\vspace{3mm}\noindent {\bf AMS} Subject Classification 2010: 35B09; 35B33; 35B40; 35J20.%

\end{abstract}

\date{}
\maketitle

\section{Introduction}
In the celebrated paper \cite{GNN81}, the well-known Gidas-Ni-Nirenberg theorem asserts that the positive solution of the following equation,
\begin{eqnarray}\label{eqn0001}
\left\{\aligned
&-\Delta u=f(u)\quad\text{in }\mathbb{R}^N,\\
&u\to0\quad\text{as }|x|\to+\infty,
\endaligned\right.
\end{eqnarray}
must be radially symmetric up to translations under some suitable conditions on the nonlinearities $f(u)$, where $N\geq1$.  Since then, an interesting and important problem is the uniqueness of the positive solution to \eqref{eqn0001}.  Kwong proved such uniqueness result in \cite{K89} for the power-type nonlinearities $f(u)=u^{p-1}-u$ with $2<p<2^*$, where $2^*$ is the critical Sobolev exponent given by $2^*=+\infty$ for $N=1,2$ and $2^*=2N/(N-2)$ for $N\geq3$ (see the earlier papers \cite{C72} for the cubic nonlinearity $f(u)=u^3-u$ and \cite{MS87,PS83,PS86} for general nonlinearities).  The extension of Kwong's result can be found in \cite{M93,PS98,ST00} and so far, to out best knowledge, the most general extension of Kwong's result is due to Serrin and Tang in \cite{ST00}:  The positive solution of \eqref{eqn0001} is unique if there exists $b>0$ such that $\frac{f(u)-u}{u-b}>0$ for $u\not=b$ and the quotient $\frac{f'(u)u-u}{f(u)-u}$ is nonincreasing of $u\in(b, +\infty)$, which is not the case of the mixed nonlinearities $f(u)=\mu u^{q-1}+\nu u^{p-1}-\lambda u$ with $2<q\not=p<2^*$ and $\mu,\nu,\lambda>0$.  In this case, \eqref{eqn0001} reads as
\begin{eqnarray}\label{eqn0002}
\left\{\aligned
&-\Delta u+\lambda u=\mu|u|^{q-2}u+\nu|u|^{p-2}u\quad\text{in }\mathbb{R}^N,\\
&u\to0\quad\text{as }|x|\to+\infty.
\endaligned\right.
\end{eqnarray}
By rescaling, \eqref{eqn0002} is equivalent to
\begin{eqnarray}\label{eqn0003}
\left\{\aligned
&-\Delta u+\lambda u=t |u|^{q-2}u+|u|^{p-2}u\quad\text{in }\mathbb{R}^N,\\
&u\to0\quad\text{as }|x|\to+\infty.
\endaligned\right.
\end{eqnarray}
In an interesting paper \cite{DPG13}, Davila et al. proved that for $N=3$, $2<q<4$, $p<6$ with sufficiently close to $6$ and $t>0$ sufficiently large, \eqref{eqn0003} has three positive radial solutions, which yields a rather striking result that Kwong's uniqueness result is in general not true for the mixed nonlinearities.  Thus, the uniqueness of the positive radial solution of \eqref{eqn0003} (or more general, \eqref{eqn0001}) remains largely open.  It is worth pointing out that the mentioned papers are all devoted to the Sobolev subcritical case for $N\geq3$, that is, $\lim_{u\to+\infty}\frac{f(u)}{u^{2^*}}=0$.

\vskip0.2in

In the Sobolev critical case for $N\geq3$, that is, $\lim_{u\to+\infty}\frac{f(u)}{u^{2^*}}>0$, the well-known Gidas-Ni-Nirenberg theorem still holds, that is, positive solutions must be radially symmetric up to translations.  However, for $N\geq3$, compared to the Sobolev subcritical case (cf. \cite{BL83}), the existence of positive solutions of \eqref{eqn0001} is more complicated in the Sobolev critical case.  For example, for \eqref{eqn0003}, the special case of \eqref{eqn0001}, the existence of positive solutions is established in \cite{AIKN12,ASM12,LLT17,ZZ12,AIIKN19}, which can be summarized as follows:
\begin{theorem}\label{thmn0001}
Let $N\geq3$ and $p=2^*$.  Then \eqref{eqn0003} has a positive radial solution which is also a ground-state, provided that
\begin{enumerate}
\item[$(a)$]\quad $N\geq4$, $2<q<2^*$ and $t>0$;
\item[$(b)$]\quad $N=3$, $4<q<6$ and $t>0$;
\item[$(c)$]\quad $N=3$, $2<q\leq 4$ and $t>0$ sufficiently large.
\end{enumerate}
\end{theorem}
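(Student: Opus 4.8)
The plan is to treat \eqref{eqn0003} with $p=2^*$ variationally, as a Brezis--Nirenberg problem in which the subcritical term $t|u|^{q-2}u$ plays the role of the lower-order perturbation. I would work with
\[
I(u)=\frac12\int_{\bbr^N}\big(|\nabla u|^2+\lambda u^2\big)\,dx-\frac{t}{q}\int_{\bbr^N}|u|^{q}\,dx-\frac1{2^*}\int_{\bbr^N}|u|^{2^*}\,dx
\]
on $H^1(\bbr^N)$, and --- by the principle of symmetric criticality together with the Gidas--Ni--Nirenberg theorem --- restrict to the radial subspace, where $u\mapsto\int_{\bbr^N}|u|^{q}$ is weakly continuous by Strauss' lemma. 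Ground states of \eqref{eqn0003} are exactly the minimizers of $I$ over the Nehari manifold $\mathcal N=\{u\neq0:\langle I'(u),u\rangle=0\}$; since $q>2$ and $2^*>2$, the level $m:=\inf_{\mathcal N}I$ is positive and coincides with the mountain-pass level of $I$. Everything then reduces to the single energy inequality $m<c^{*}:=\frac1N S^{N/2}$, where $S$ is the best constant in the Sobolev inequality $S\|u\|_{2^*}^2\le\|\nabla u\|_2^2$.

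First I would establish compactness below $c^{*}$: every Palais--Smale sequence $(u_n)$ of $I$ restricted to the radial space, at a level $c<c^{*}$, is relatively compact. If $u_n\rightharpoonup u$, then $u$ is a critical point, so $I(u)=\tfrac1N\|u\|_{2^*}^{2^*}+t(\tfrac12-\tfrac1q)\|u\|_{q}^{q}\ge0$; for $v_n:=u_n-u$ the subcritical part vanishes (compact radial embedding into $L^q$), and the Brezis--Lieb lemma gives $\|\nabla v_n\|_2^2+\lambda\|v_n\|_2^2=\|v_n\|_{2^*}^{2^*}+o(1)\to L$ and $I(u_n)=I(u)+\tfrac1N L+o(1)$. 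If $L>0$, Sobolev forces $L\ge S^{N/2}$, hence $c\ge I(u)+c^{*}\ge c^{*}$, a contradiction; thus $L=0$, $v_n\to0$ in $H^1(\bbr^N)$, and $m$ is attained whenever $m<c^{*}$. It remains to verify this inequality in the three cases.

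For $(a)$ and $(b)$ I would plug the truncated Aubin--Talenti bubbles $u_\varepsilon=\varphi\,U_\varepsilon$, $U_\varepsilon(x)=\varepsilon^{-(N-2)/2}U(x/\varepsilon)$ (with $\varphi$ a fixed cut-off), into $I$. Using $\|\nabla u_\varepsilon\|_2^2=S^{N/2}+O(\varepsilon^{N-2})$, $\|u_\varepsilon\|_{2^*}^{2^*}=S^{N/2}+O(\varepsilon^{N})$, together with $\|u_\varepsilon\|_2^2$ of order $\varepsilon^2$ for $N\ge5$, $\varepsilon^2|\ln\varepsilon|$ for $N=4$, $\varepsilon$ for $N=3$, and $\|u_\varepsilon\|_q^q$ of order $\varepsilon^{\,N-\frac{(N-2)q}{2}}$ (up to a logarithmic factor when $N=3$, $q=3$), and noting that for fixed $t$ the maximiser $\bar s_\varepsilon$ of $s\mapsto I(su_\varepsilon)$ stays bounded away from $0$ and $\infty$ as $\varepsilon\to0$, one obtains
\[
\max_{s>0}I(su_\varepsilon)\le\frac1N S^{N/2}+C_\lambda\,\|u_\varepsilon\|_2^2-c_0\,t\,\|u_\varepsilon\|_q^q+O(\varepsilon^{N-2}),\qquad C_\lambda,c_0>0 .
\]
Because $q>2$ the exponent $N-\frac{(N-2)q}{2}$ is strictly below $2$; hence in case $(a)$ ($N\ge4$) the subcritical gain $-c_0t\|u_\varepsilon\|_q^q$ dominates both $\|u_\varepsilon\|_2^2$ and $O(\varepsilon^{N-2})$ for every $t>0$, and in case $(b)$ ($N=3$, $4<q<6$) the exponent $3-\tfrac q2$ is below $1$, so it dominates $\|u_\varepsilon\|_2^2\asymp\varepsilon$ for every $t>0$. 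Taking $\varepsilon$ small then gives $m\le\max_{s>0}I(su_\varepsilon)<c^{*}$.

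Case $(c)$, $N=3$ and $2<q\le4$, is the conceptual crux and the place I expect the real difficulty: now $3-\tfrac q2\ge1$, so shrinking the bubble produces no gain, and --- as the later sections of the paper confirm --- a largeness condition on $t$ is genuinely necessary (while $N\ge4$ is excluded only because it falls under $(a)$). Here I would instead keep the test function \emph{fixed}: for any radial $0\le\phi\in C_c^\infty(\bbr^N)$ with $\phi\not\equiv0$, the derivative of $s\mapsto I(s\phi)$ equals $s$ times a strictly decreasing function of $s$, so the maximum is attained at a unique $s_t>0$; since $q>2$ one gets $s_t\to0$ and
\[
\max_{s>0}I(s\phi)=\Big(\tfrac12-\tfrac1q\Big)s_t^2\big(\|\nabla\phi\|_2^2+\lambda\|\phi\|_2^2\big)+\Big(\tfrac1q-\tfrac1{2^*}\Big)s_t^{2^*}\|\phi\|_{2^*}^{2^*}\longrightarrow0\quad\text{as }t\to+\infty ,
\]
so that $m\le\max_{s>0}I(s\phi)<c^{*}$ for $t$ large. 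In all three cases the precompact Palais--Smale sequence at level $m$ then yields a minimizer $u$; replacing $u$ by $|u|$ preserves $I$ and $\mathcal N$, the strong maximum principle gives $u>0$, elliptic regularity gives smoothness and decay at infinity, $u$ solves \eqref{eqn0003}, and, having least energy among nontrivial solutions, it is a positive radial ground state. The hard part is technical rather than conceptual: pinning down the sharp orders (especially the $N=3$ mass term of exact order $\varepsilon$, which is precisely what distinguishes $(b)$ from $(c)$) and the uniform control of $\bar s_\varepsilon$ in the bubble estimate.
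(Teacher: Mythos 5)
This theorem is not proved in the paper --- it is quoted as a summary of \cite{AIKN12,ASM12,LLT17,ZZ12,AIIKN19}, with the method described in one sentence as the Brezis--Nirenberg strategy of testing with Aubin--Talenti bubbles to force compactness of $(PS)$ sequences below $\frac1N S^{N/2}$, which is exactly what you carry out; your argument, including the cleaner fixed-test-function treatment of case $(c)$ (where $\max_{s>0}I(s\phi)\to0$ as $t\to+\infty$, so the level drops below $\frac1N S^{N/2}$ for $t$ large), is correct. The only step worth making explicit is that the infimum of $I$ over the radial Nehari manifold coincides with $m(t)$ taken over the full Nehari manifold (e.g.\ via Schwarz symmetrization), so that your radial minimizer is a ground state in the sense of the paper's definition and not merely a least-energy radial solution.
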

Theorem~\ref{thmn0001} is proved by adapting the classical ideas of Brez\'is and Nirenberg in \cite{BN83}, that is, using the Aubin-Talanti bubbles (cf. \eqref{eq0095}) as test functions to control the energy values so that the $(PS)$ sequences of the associated functional, corresponding to \eqref{eqn0003} with $p=2^*$, are compact at the ground-state level.  This strategy is invalid for $N=3$, $2<q\leq 4$ and $t>0$ not sufficiently large.  Thus, whether \eqref{eqn0003} with $p=2^*$ always has a positive radial solution is not clear.  Note that according to the concentration-compactness principle (cf. \cite{L84}), the only possible way that the $(PS)$ sequences of the associated functional loss the compactness at the ground-state level is that they concentrate at single points and behavior like a Aubin-Talanti bubble under some suitable scalings in passing to the limit.  Thus, by the energy estimates in \cite{AIKN12,ASM12,LLT17,ZZ12,AIIKN19}, it is reasonable to think that \eqref{eqn0003} with $p=2^*$ has no ground-states for $N=3$, $2<q\leq 4$ and $t>0$ not sufficiently large.  On the other hand, the uniqueness of positive radial solutions to \eqref{eqn0003} with $p=2^*$ seems also very complicated.  If $3\leq N\leq6$
and $(N+2)/(N-2)<q<2^*$ then Pucci and Serrin in \cite{PS98} proved that \eqref{eqn0003} with $p=2^*$ has at most one
positive radial solution.  Recently, Akahori et al. in \cite{AIK20,AIKN2021,AIIKN19} and Coles and Gustafson in \cite{CG20} proved that the radial ground-state of \eqref{eqn0003} with $p=2^*$ is unique and nondegenerate for all small $t>0$ when $N\geq5$ and $q\in(2, 2^*)$ or $N=3$ and $q\in(4, 2^*)$; and for all large $t>0$ when $N\geq3$
and $2+4/N<q<2^*$.  However, the uniqueness of positive radial solutions seems not true for \eqref{eqn0003} with $p=2^*$ in general, since it is suggested in \cite{DPG13} by the numerical evidence that \eqref{eqn0003} with $p=2^*$ has two positive radial solutions for $N=3$, $2<q<4$ and $t>0$ sufficiently large.  Moreover, Chen et al. in \cite{CDG16} proved the existence of
arbitrary large number of bubble-tower positive solutions of \eqref{eqn0003} in the slightly supercritical
case when $q<2^*<p=2^*+\ve$ with $\ve>0$ sufficiently small.  We also mention the paper \cite{FG03}, in which the authors proved the existence of positive radial solutions to \eqref{eqn0003} for $2<q<2^*\leq p$ with $t>0$ sufficiently large and \eqref{eqn0003} has no positive solutions for $2<q<2^*< p$ with $t>0$ sufficiently small via ODE's methods.

\vskip0.2in

Inspired by the above facts, we shall explore the existence and nonexistence of positive solutions of \eqref{eqn0003} with $p=2^*$ by studying the existence and nonexistence of ground-states of \eqref{eqn0003} for $N=3$ and $2<q\leq 4$.  We shall also explore the uniqueness of positive solutions of \eqref{eqn0003} with $p=2^*$ by giving a rigorous proof of the numerical conjecture in \cite{DPG13}.

\vskip0.2in

Let us first introduce some necessary notations.  By classical elliptic estimates, for $N\geq3$ and $p=2^*$, \eqref{eqn0003} is equivalent to
\begin{eqnarray}\label{eq0001}
\left\{\aligned
&-\Delta u+\lambda u=t |u|^{q-2}u+|u|^{2^*-2}u\quad\text{in }\mathbb{R}^N,\\
&u\in H^1(\bbr^N),
\endaligned\right.
\end{eqnarray}
where $t>0$, $\lambda>0$ and $2<q<2^*$.  Clearly, by rescaling if necessary, it is sufficiently to consider the case $\lambda=1$ for \eqref{eq0001}.  Let
\begin{eqnarray}\label{eqnewnew0009}
m(t)=\inf_{v\in\mathcal{N}_t}\mathcal{E}_t(v),
\end{eqnarray}
where
\begin{eqnarray}\label{eqn0010}
\mathcal{E}_t(v)=\frac{1}{2}(\|\nabla v\|_2^2+\|v\|_2^2)-\frac{t}{q}\|v\|_q^q-\frac{1}{2^*}\|v\|_{2^*}^{2^*}
\end{eqnarray}
is the corresponding functional of \eqref{eq0001} with $\lambda=1$ and
\begin{eqnarray*}
\mathcal{N}_t=\{v\in H^1(\bbr^N)\backslash\{0\}\mid \mathcal{E}_t'(v)v=0\}
\end{eqnarray*}
is the usual Nehari manifold.  Here, $\|\cdot\|_p$ is the usual norm in the Lebesgue space $L^p(\bbr^N)$.
\begin{definition}
We say that $u$ is a ground-state of \eqref{eq0001} if $u$ is a nontrivial solution of \eqref{eq0001} with $\mathcal{E}_t(u)=m(t)$.
\end{definition}
Now, our main result is the following.
\begin{theorem}\label{thm0002}
Let $\lambda=1$, $N=3$ and $2<q\leq4$.  Then there exists $t_q^*>0$, which may depend on $q$, such that
\begin{enumerate}
\item[$(1)$]\quad \eqref{eq0001} has ground-states for $t\geq t_q^*$ and has no ground-states for $0<t<t_q^*$ in the case of $2<q<4$.
\item[$(2)$]\quad \eqref{eq0001} has ground-states for $t> t_4^*$ and has no ground-states for $0<t<t_4^*$ in the case of $q=4$.
\end{enumerate}
Moreover, if $2<q<4$ then there exists $t_q>0$, which may depend on $q$, such that \eqref{eq0001} has two positive radial solutions $u_{t,1}$ and $u_{t,2}$ for $t>t_q$, where $u_{t,1}$ is a ground-state with $\|u_{t,1}\|_\infty\sim t^{-\frac{1}{q-2}}$ and $u_{t,2}$ is a blow-up solution with
\begin{eqnarray*}
\|u_{t,2}\|_{\infty}\sim\left\{\aligned&t^{\frac{1}{4-q}},\quad 3<q<4,\\
&t\ln t,\quad q=3,\\
&t^{\frac{1}{q-2}},\quad 2<q<3,\endaligned\right.
\end{eqnarray*}
as $t\to+\infty$.
\end{theorem}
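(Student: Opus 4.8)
The plan is to treat the two solutions by very different variational mechanisms, reflecting their different concentration behaviors as $t\to+\infty$. For the ground-state $u_{t,1}$, the existence for $t$ large is already contained in the first part of Theorem~\ref{thm0002} (and in Theorem~\ref{thmn0001}(c)); what remains is to pin down the asymptotic scale $\|u_{t,1}\|_\infty\sim t^{-1/(q-2)}$. The natural approach is to rescale: set $u_{t,1}(x)=t^{-1/(q-2)}w_t(x)$ and observe that $w_t$ solves $-\Delta w_t + w_t = |w_t|^{q-2}w_t + t^{-(2^*-2)/(q-2)}|w_t|^{2^*-2}w_t$, so since $2^*-2>0$ the critical term carries a vanishing coefficient and $w_t$ should converge (in $H^1_{rad}$) to the unique positive solution of $-\Delta w+w=w^{q-1}$, i.e. the Aubin--Talanti-free ground state of the subcritical problem. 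Making this rigorous requires uniform $H^1$ bounds on $w_t$ (from the energy estimate $m(t)\lesssim t^{-\cdot}$, which the first part already gives), a compactness argument in the radial subspace, and nondegeneracy of the limit to upgrade to $L^\infty$ convergence via elliptic regularity and a Pohozaev/moving-plane decay estimate.

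For the second solution $u_{t,2}$, the plan is a mountain-pass / linking construction \emph{above} the ground-state level but below the first critical threshold $m(t)+\frac{1}{N}S^{N/2}$, using an Aubin--Talanti bubble concentrating at a scale $\mu_t\to 0$ determined by balancing the subcritical and critical contributions in the energy expansion. Concretely, one computes $\mathcal{E}_t\big(u_{t,1}+\mathrm{PU}_{\mu_t}\big)$ for the projected bubble $\mathrm{PU}_{\mu_t}$ and extracts the leading correction terms: the bubble self-energy $\frac1NS^{N/2}$, the interaction term with $u_{t,1}$ (of order $\mu_t^{(N-2)/2}$ for $N=3$), and the subcritical loss $-\frac{t}{q}\int |\mathrm{PU}_{\mu_t}|^q$ (of order $t\mu_t^{(2N-(N-2)q)/2}$ for $q>\frac{N}{N-2}$, i.e.\ $q>3$ when $N=3$; logarithmic when $q=3$; dominated differently when $q<3$). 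Optimizing over $\mu_t$ gives exactly the three regimes in the stated asymptotics: the dominant balance is bubble-$u_{t,1}$ interaction vs.\ subcritical loss for $3<q<4$, giving $\mu_t\sim t^{-1/(4-q)}$ hence $\|u_{t,2}\|_\infty\sim \mu_t^{-1/2}\sim t^{1/(4-q)}$ (with the logarithmic correction at $q=3$), whereas for $2<q<3$ the subcritical term is subordinate and the bubble essentially interacts with the rescaled ground-state, forcing $\mu_t\sim t^{-2/(q-2)}$ and $\|u_{t,2}\|_\infty\sim t^{1/(q-2)}$.

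The construction then proceeds in the standard way: show the functional $\mathcal{E}_t$ restricted to $H^1_{rad}$ has a mountain-pass geometry with a path through $u_{t,1}+s\,\mathrm{PU}_{\mu_t}$, $s\ge 0$, whose maximal energy is strictly less than $m(t)+\frac1NS^{N/2}$ for $t$ large; by the radial compactness of the embedding $H^1_{rad}(\bbr^3)\hookrightarrow L^p$ for $2<p<6$ together with the Brez\'is--Lieb lemma, any $(PS)$ sequence below that threshold is precompact, so the minimax value is a critical value; and the corresponding critical point $u_{t,2}$ is positive (by working with $|u|$ or by a maximum principle) and distinct from $u_{t,1}$ because $\mathcal{E}_t(u_{t,2})>m(t)=\mathcal{E}_t(u_{t,1})$. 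A blow-up/concentration analysis of $u_{t,2}$ as $t\to+\infty$ — extracting the bubble, identifying its scale via the Pohozaev identity, and ruling out secondary bubbles by the energy ceiling — then yields the claimed $L^\infty$ asymptotics, which must match the value of $\mu_t$ used in the test path.

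The main obstacle I expect is twofold: first, obtaining \emph{sharp} (matching upper and lower) energy expansions for $\mathcal{E}_t(u_{t,1}+s\,\mathrm{PU}_{\mu_t})$ that are uniform in $t$ and cover all three $q$-regimes including the delicate threshold $q=3$ where the subcritical integral produces a $|\ln\mu_t|$ term — getting the competition between the interaction term, the subcritical loss, and the error terms from projecting the bubble exactly right is where the three cases in the statement are born, and the constants must be controlled well enough that the strict inequality below $m(t)+\frac1NS^{N/2}$ genuinely holds. Second, the \emph{lower} bound on $\|u_{t,2}\|_\infty$ (i.e.\ that $u_{t,2}$ really blows up at the predicted rate rather than merely being bounded above by it) requires a careful blow-up argument: one must show that $u_{t,2}-u_{t,1}$, after rescaling, converges to a nontrivial bubble and that no energy is lost, using the precise value of $\mathcal{E}_t(u_{t,2})$ relative to $m(t)$; the interplay with the already-established nonexistence of ground states for small $t$ and with the results of \cite{WW21} on normalized solutions (which presumably supply some of the needed a priori bounds or the identification of $\lambda$) will be essential here. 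Everything else — the mountain-pass geometry, the $(PS)$ analysis in the radial space, positivity — is by now routine once the energy estimate is in hand.
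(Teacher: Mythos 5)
There are two genuine gaps. First, your proposal does not actually prove the first (and main new) half of the statement: the existence of a sharp threshold $t_q^*$ with ground-states for $t\geq t_q^*$ (resp.\ $t>t_4^*$) and \emph{no} ground-states for $0<t<t_q^*$. You dismiss this by saying the existence of $u_{t,1}$ "is already contained in the first part of Theorem~\ref{thm0002}", but that first part is precisely what must be established. In the paper this occupies all of Section~3: one shows $m(t)=\frac{1}{3}S^{\frac{3}{2}}$ for $t$ small (Lemma~\ref{lem0001}, via a contradiction argument in which a putative minimizer is rescaled to an Aubin--Talanti bubble, sharp two-sided decay estimates give $\|w_{t_n}\|_q^q$ up to constants, and the Pohozaev identity forces $t_n\sim\sigma_n^{\frac{2-q}{2}}$ etc., incompatible with $t_n,\sigma_n\to0$ when $2<q\leq4$); one defines $t_q^*$ by \eqref{eqn0090}; nonexistence below $t_q^*$ follows from the \emph{strict} monotonicity of $t\mapsto m(t)$ once $m(t)$ is attained; and attainment at $t=t_q^*$ for $2<q<4$ needs the quantitative lower bound $\|v_t\|_q^q\gtrsim1$ as $t\to t_q^*$ (Proposition~\ref{prop0001}) to prevent vanishing of the limit. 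None of these steps appears in your outline, and they are not consequences of Theorem~\ref{thmn0001}.

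Second, for the blow-up solution $u_{t,2}$ your direct mountain-pass construction runs into a compactness obstruction that the proposed energy ceiling $m(t)+\frac{1}{3}S^{\frac{3}{2}}$ does not resolve. Since $m(t)\to0$ as $t\to+\infty$ and the target solution satisfies $\|\nabla u_{t,2}\|_2^2=S^{\frac{3}{2}}+o(1)$, its energy level converges to exactly $\frac{1}{3}S^{\frac{3}{2}}$, which lies inside the window $\big(m(t),\,m(t)+\frac{1}{3}S^{\frac{3}{2}}\big)$ and is itself a level at which $(PS)$ fails (trivial weak limit plus one escaping bubble). So "below the first threshold" does not separate your minimax critical point from the noncompact profile, and the whole construction would have to be redone as a genuine finite-dimensional reduction with sharp, $t$-uniform expansions --- essentially reproving \cite{DPG13}/\cite{WW21} from scratch. (Your heuristic balance is also internally inconsistent in the range $3<q<4$: $\mu_t\sim t^{-1/(4-q)}$ gives $\mu_t^{-1/2}\sim t^{1/(2(4-q))}$, not $t^{1/(4-q)}$; the correct concentration scale is $\mu_t\sim t^{-2/(4-q)}$.) The paper avoids all of this by a transfer argument: the concentrating normalized solutions $\widetilde u_\mu$ already constructed in \cite[Theorem~1.2]{WW21} are rescaled via \eqref{eq0003} and \eqref{eq0010} into fixed-frequency solutions $\widetilde v_\mu$ of \eqref{eq0001} with parameter $\widetilde t_\mu=\mu\widetilde\lambda_\mu^{\frac{q\gamma_q-q}{2}}$; the asymptotics \eqref{eq0007}--\eqref{eq0060} show $\widetilde\lambda_\mu\sim\ve_\mu^2$ and $\widetilde t_\mu\to+\infty$ with the three stated rates, while $\|u_{t,1}\|_\infty\sim t^{-\frac{1}{q-2}}$ is quoted from \cite{MM21} to certify that $\widetilde v_\mu$ is not the ground state. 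If you want a self-contained variational proof you must confront the $(PS)$ failure at level $\frac{1}{3}S^{\frac{3}{2}}$ head-on; as written, the argument would not close.
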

\begin{remark}
Theorem~\ref{thm0002}, together with Theorem~\ref{thmn0001}, almost completely solves the existence of ground-states to \eqref{eq0001}, except for $N=3$, $q=4$ and $t=t_4^*$.  Moreover, Theorem~\ref{thm0002} also verifies the numerical conjecture in \cite{DPG13}.
\end{remark}
\vskip0.2in

The proof of Theorem~\ref{thm0002} is based on our very recent study on the normalized solution of \eqref{eq0001} with the additional condition $\|u\|_2^2=a^2$, where $a>0$.  We remark that we shall call $u$ is a fixed-frequency solution of \eqref{eq0001} if the frequency $\lambda$ is fixed, since for the normalized solution of \eqref{eq0001}, the frequency $\lambda$ is a part of unknowns, which appears as a Lagrange multiplier.  Now, let us explain our ideas in proving Theorem~\ref{thm0002}.  Let $\mu>0$, $a>0$ and $(u_\mu, \lambda_\mu)$ be a normalized solution of \eqref{eq0001} for $t=\mu$ with the additional condition $\|u_\mu\|_2^2=a^2$, that is, $(u_\mu, \lambda_\mu)$ is a solution of the following system:
\begin{eqnarray}\label{eqn1001}
\left\{\aligned
&-\Delta u+\lambda u=\mu|u|^{q-2}u+|u|^{2^*-2}u\quad\text{in }\mathbb{R}^N,\\
&u\in H^1(\bbr^N),\quad \|u\|_2^2=a^2,
\endaligned\right.
\end{eqnarray}
then by the Pohozaev identity satisfied by $u_\mu$ (cf. \cite[(4.7)]{WW21}),
\begin{eqnarray}\label{eq0002}
\lambda_{\mu}a^2=\lambda_{\mu}\|u_{\mu}\|_2^2=(1-\gamma_q)\mu\|u_{\mu}\|_q^q>0,
\end{eqnarray}
where $\gamma_q=\frac{N(q-2)}{2q}$.
Let
\begin{eqnarray}\label{eq0003}
v_\mu(x)=\lambda_\mu^{-\frac{N-2}{4}}u_\mu(\lambda_\mu^{-\frac{1}{2}}x),
\end{eqnarray}
then by direct calculations, we know that $v_\mu$ is a fixed-frequency solution of \eqref{eq0001} for $\lambda=1$ and $t=\mu\lambda_\mu^{\frac{q\gamma_q-q}{2}}$.
By \eqref{eq0002}, we also have
\begin{eqnarray*}
\lambda_{\mu}=\frac{(1-\gamma_q)\mu}{a^2}\lambda_\mu^{\frac{q\gamma_q-q}{2}}\|v_\mu\|_q^q.
\end{eqnarray*}
Thus, by letting
\begin{eqnarray}\label{eq0010}
t_\mu=\mu\lambda_\mu^{\frac{q\gamma_q-q}{2}},
\end{eqnarray}
we know that $(v_\mu,t_\mu)$ solves the following system:
\begin{eqnarray}\label{eq0005}
\left\{\aligned
&-\Delta v+v=t |v|^{q-2}v+|v|^{2^*-2}v\quad\text{in }\mathbb{R}^N,\\
&v\in H^1(\bbr^N),\quad t^{\frac{2}{q\gamma_q-q}-1}=\frac{1-\gamma_q}{a^2\mu^{\frac{2}{q-q\gamma_q}}}\|v\|_q^q.
\endaligned\right.
\end{eqnarray}
Clearly, if $(v, t)$ is a solution of the system~\eqref{eq0005}, then by letting
\begin{eqnarray}\label{eq0006}
\lambda_\mu=\bigg(\frac{t}{\mu}\bigg)^{\frac{2}{q\gamma_q-q}}\quad\text{and}\quad u_\mu(x)=\lambda_\mu^{\frac{N-2}{4}}v(\lambda_\mu^{\frac{1}{2}}x),
\end{eqnarray}
$(u_\mu, \lambda_\mu)$ is also a normalized solution of \eqref{eq0001} for $t=\mu$ with the additional condition $\|u_\mu\|_2^2=a^2$, that is $(u_\mu, \lambda_\mu)$ is also a normalized solution of \eqref{eqn1001}.  Thus, by our above observations, normalized solutions of \eqref{eq0001} is equivalent to fixed-frequency solutions of \eqref{eq0001} with another additional condition.  Since we make a detail study on some special normalized solutions of \eqref{eq0001} in \cite{WW21}, we could use these detail estimates to derive Theorem~\ref{thm0002}.

\vskip0.2in

Our observations on the relations between fixed-frequency solutions and normalized solutions of \eqref{eq0001} also bring in some new lights to study the normalized solutions of \eqref{eq0001}.  Indeed, let $v_t$ be a fixed-frequency solution of \eqref{eq0001}, then by the above observations, finding normalized solutions of \eqref{eq0001} is equivalent to finding solutions of the following equation:
\begin{eqnarray}\label{eqn0020}
t^{\frac{2}{q\gamma_q-q}-1}-\frac{1-\gamma_q}{a^2\mu^{\frac{2}{q-q\gamma_q}}}\|v_t\|_q^q=0.
\end{eqnarray}
This is a reduction, which heavily depends on the scaling technique and the Pohozaev identity, since we reduce the solvability of \eqref{eq0001} in $H^1(\bbr^N)$ to the solvability of \eqref{eqn0020} in $\bbr^+$.  Let
\begin{eqnarray*}
\mathcal{A}_\mu(u)=\frac{1}{2}\|\nabla u\|_2^2-\frac{\mu}{q}\|v\|_q^q-\frac{1}{2^*}\|v\|_{2^*}^{2^*}.
\end{eqnarray*}
Then, $\mathcal{A}_\mu|_{S_a}(u)$ is the corresponding functional of \eqref{eqn1001}, where $S_a=\{u\in H^1(\bbr^N)\mid \|u\|_2^2=a^2\}$.
\begin{definition}
We say that $u$ is a normalized ground-state of \eqref{eqn1001} if $u$ is a solution of \eqref{eqn1001} and $\mathcal{A}_\mu(u)\leq\mathcal{A}_\mu(v)$ for any other solutions of \eqref{eqn1001}.
\end{definition}
By \eqref{eq0006}, if $(u_\mu, \lambda_\mu)$ is a solution of \eqref{eqn1001}, then,
\begin{eqnarray*}
\mathcal{A}_\mu(u_\mu)+\frac{\lambda_\mu a^2}{2}=\mathcal{E}_{t_\mu}(v_\mu),
\end{eqnarray*}
where $(v_\mu, t_\mu)$ is a solution of \eqref{eq0005}.  Thus, normalized ground-states of \eqref{eqn1001} must be generated by positive fixed-frequency ground-states of \eqref{eq0001} through the equation~\eqref{eqn0020}.  With these in minds, we can obtain the following results.
\begin{theorem}\label{thm0003}
Let $N\geq3$ and $2<q<2+\frac{4}{N}$.  Then there exist $0<\widehat{t}_{q,a}\leq \overline{t}_{q,a}<+\infty$, which may depend on $q$ and $a$, such that
\eqref{eq0001} has normalized ground-states with the additional condition $\|u\|_2^2=a^2$ for $0<t<\widehat{t}_{q,a}$ and \eqref{eq0001} has no normalized ground-states with the additional condition $\|u\|_2^2=a^2$ for $t>\overline{t}_{q,a}$.
\end{theorem}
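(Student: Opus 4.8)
The plan is to run everything through the exact correspondence, recorded above, between normalized solutions of \eqref{eqn1001} and fixed-frequency solutions of \eqref{eq0001}. Set $\alpha=\frac{2}{q\gamma_q-q}$, so that $\alpha<-1$ (hence $\alpha-1<-2<0$) whenever $2<q<2^*$, and $\frac{2}{q-q\gamma_q}=-\alpha$. If, for a given $\mu>0$, \eqref{eqn1001} admits a normalized ground-state $u_\mu$, then by \eqref{eq0003}, \eqref{eq0010} and the discussion around \eqref{eq0006} its rescaling $v_\mu$ is a positive fixed-frequency ground-state of \eqref{eq0001} at a parameter $t_\mu>0$, and \eqref{eqn0020} holds; rearranged, it reads
\[
\frac{t_\mu^{\alpha-1}}{\|v_\mu\|_q^q}=\frac{1-\gamma_q}{a^2}\,\mu^{\alpha}.
\]
Since $\alpha<0$, the right-hand side is large when $\mu$ is small and small when $\mu$ is large, so both halves of Theorem~\ref{thm0003} are controlled by the range of the quotient on the left over fixed-frequency ground-states. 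The existence of a normalized ground-state for $\mu$ small --- a constrained local minimizer of $\mathcal{A}_\mu|_{S_a}$, of least energy among all normalized solutions --- is provided by our earlier study \cite{WW21}, so I concentrate on nonexistence for $\mu$ large, which by the displayed identity reduces to producing a constant $H_{\min}>0$ such that $\frac{t^{\alpha-1}}{\|v\|_q^q}\ge H_{\min}$ for every positive fixed-frequency ground-state $v$ of \eqref{eq0001} at every admissible parameter $t$.

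For this I would establish two a priori estimates on $\|v\|_q^q$, each valid for an arbitrary fixed-frequency ground-state $v$ at parameter $t$ and using only the Nehari identity $\|\nabla v\|_2^2+\|v\|_2^2=t\|v\|_q^q+\|v\|_{2^*}^{2^*}$, the energy identity $m(t)=\mathcal{E}_t(v)=(\tfrac12-\tfrac1q)t\|v\|_q^q+\tfrac1N\|v\|_{2^*}^{2^*}$ that follows from it, and comparison test functions. First, fixing any $w_0\in C_c^\infty(\bbr^N)\setminus\{0\}$ and discarding the (negative) $q$-term in $\mathcal{E}_t(sw_0)$ gives $m(t)\le\max_{s>0}\big(\tfrac{s^2}{2}(\|\nabla w_0\|_2^2+\|w_0\|_2^2)-\tfrac{s^{2^*}}{2^*}\|w_0\|_{2^*}^{2^*}\big)=:\Lambda$, a bound independent of $t$; inserting this in the energy identity bounds $t\|v\|_q^q$ and $\|v\|_{2^*}^{2^*}$, hence via Nehari $\|\nabla v\|_2^2+\|v\|_2^2$, so the Gagliardo--Nirenberg inequality yields a uniform bound $\|v\|_q^q\le C_0$. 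Second, taking as test function the Kwong profile $W$ of $-\Delta W+W=|W|^{q-2}W$ in $\bbr^N$ and again discarding the critical term gives $m(t)\le\max_{s>0}\big(\tfrac{s^2}{2}(\|\nabla W\|_2^2+\|W\|_2^2)-\tfrac{ts^q}{q}\|W\|_q^q\big)=C_1\,t^{-2/(q-2)}$, which inserted in the energy identity yields the decay $\|v\|_q^q\le\frac{m(t)}{(\tfrac12-\tfrac1q)t}\le C_2\,t^{-q/(q-2)}$. Combining,
\[
\frac{t^{\alpha-1}}{\|v\|_q^q}\ \ge\ \max\Big(C_0^{-1}\,t^{\alpha-1},\ C_2^{-1}\,t^{\alpha-1+q/(q-2)}\Big),
\]
and a direct computation gives $\alpha-1+\frac{q}{q-2}=\frac{4}{(N-2)q-2N}+\frac{2}{q-2}>0$ \emph{exactly when} $q<2+\frac4N$. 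Under this hypothesis the right-hand side of the last display blows up both as $t\to0^+$ (first term, since $\alpha-1<0$) and as $t\to+\infty$ (second term), so its infimum over $t>0$ is a number $H_{\min}>0$. Consequently, if \eqref{eqn1001} had a normalized ground-state for some $\mu$, the identity at the start would force $\frac{1-\gamma_q}{a^2}\mu^{\alpha}\ge H_{\min}$, that is $\mu\le\big(\tfrac{a^2H_{\min}}{1-\gamma_q}\big)^{1/\alpha}=:\overline{t}_{q,a}<+\infty$. Taking $\widehat{t}_{q,a}$ to be the positive existence threshold of \cite{WW21}, shrunk if necessary so that $\widehat{t}_{q,a}\le\overline{t}_{q,a}$, completes the proof. (When $N=3$ one knows in addition, from Theorems~\ref{thmn0001}--\ref{thm0002}, that $t_\mu\ge t_q^*>0$, so the uniform bound $\|v\|_q^q\le C_0$ is not even needed there.)

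The step I expect to require the most care is the exponent bookkeeping leading to $\alpha-1+\frac{q}{q-2}>0$: this is the only place where the restriction $q<2+\frac4N$ is used, and it cannot be relaxed --- for $q\ge 2+\frac4N$ that exponent is $\le0$, the crude estimates above no longer force $H_{\min}>0$, and the conclusion is genuinely expected to fail. A secondary, mostly cosmetic, point is that fixed-frequency ground-states of \eqref{eq0001} are not known to be unique; but since both a priori bounds on $\|v\|_q^q$ use only the Nehari and energy identities together with fixed (or $t$-scaled) comparison functions, they hold for \emph{every} ground-state, and the argument is insensitive to any multiplicity. The existence half is not proved here but imported from \cite{WW21}, where the relevant normalized ground-state is obtained as a constrained local minimizer carrying the least energy among all normalized solutions.
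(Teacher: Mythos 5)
Your proposal is correct, and it follows the same overall architecture as the paper: the reduction of normalized ground-states to the scalar constraint \eqref{eqn0020}, existence for small $\mu$ imported from \cite{WW21}, and nonexistence for large $\mu$ obtained by showing that the admissible $\mu$'s are bounded above, with the restriction $q<2+\frac{4}{N}$ entering through exactly the same exponent inequality (your $\alpha-1+\frac{q}{q-2}>0$ is the paper's $\frac{2}{q-q\gamma_q}-\frac{N}{q\gamma_q}<0$, since $\frac{N}{q\gamma_q}=\frac{2}{q-2}$). Where you genuinely diverge is in how the key decay estimate on $\|v_t\|_q^q$ as $t\to+\infty$ is produced. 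The paper derives the two-sided asymptotics $m(t)\sim t^{-N/(q\gamma_q)}$ and $\|v_t\|_q^q\sim t^{-N/(q\gamma_q)-1}$ via a differential-inequality argument: the derivative formula $m'(t)=-\frac{1}{q}\|v_t\|_q^q$ from \eqref{eq0133}, the Pohozaev relations \eqref{eq0035}, a uniform $L^\infty$ bound on $v_t$, and monotonicity of $m(t)t^{N/(q\gamma_q)}$ (adjusted by a correction term) in both directions. You observe that only the upper bound is needed for nonexistence and obtain it in one line by testing $m(t)$ against the subcritical Kwong profile and discarding the critical term, giving $m(t)\lesssim t^{-2/(q-2)}$ and hence $\|v\|_q^q\le \frac{m(t)}{(\frac12-\frac1q)t}\lesssim t^{-q/(q-2)}$; together with the crude uniform bound $\|v\|_q^q\le C_0$ near the lower end of the parameter range, this makes the quotient $t^{\alpha-1}/\|v\|_q^q$ uniformly bounded below. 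This is a genuine simplification of the nonexistence half: it avoids the $L^\infty$ estimate, the a.e.\ differentiability of $m$, and the two-step monotonicity argument, at the cost of losing the sharp asymptotics (which the paper's route does provide and which could be of independent use). Both proofs rest on the same unproved-in-detail assertion from the paper's introduction that a normalized ground-state rescales to a fixed-frequency \emph{ground-state} of \eqref{eq0001}; since your a priori bounds use the variational characterization $\mathcal{E}_t(v)=m(t)$, your argument needs this just as the paper's does, so it is shared infrastructure rather than a new gap.
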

\begin{remark}
Theorem~\ref{thm0003}, together with our recent study in \cite{WW21}, gives a completed answer to the open question proposed by Soave in \cite{S20}.
\end{remark}
\vskip0.2in

As an application of our new reduction in finding normalized solutions of \eqref{eq0001}, we shall also consider the following Schr\"odinger equation:
\begin{eqnarray}\label{eq0066}
\left\{\aligned
&-\Delta u+\lambda u+V(x)u=|u|^{p-2}u\quad\text{in }\mathbb{R}^3,\\
&u\in H^1(\bbr^3),\quad \|u\|_2^2=r^2,
\endaligned\right.
\end{eqnarray}
where $x=(x_1,x_2,x_3)\in\bbr^3$, $V(x)=x_1^2+x_2^2$, $\frac{10}{3}<p<6$ and $r>0$ is a constant.  \eqref{eq0066} is studied recently by Bellazzini et al. in \cite{BBJV17}, in which the authors proved that \eqref{eq0066} has a ground-state normalized solution, which is also a local minimizer of the associated functional on the $L^2$-sphere $\|u\|_2^2=r^2$, with a negative Lagrange multiplier $\lambda$ for $r>0$ sufficiently small.  According to the geometry of the associated functional on the $L^2$-sphere $\|u\|_2^2=r^2$, Bellazzini et al. also conjecture in \cite{BBJV17} that \eqref{eq0066} has a second normalized solution, which is also a mountain-pass solution, for $r>0$ sufficiently small.  In this paper, we prove this conjecture by obtaining the following result.
\begin{theorem}\label{thm0004}
Let $\frac{10}{3}<p<6$.  Then for $r>0$ sufficiently small, \eqref{eq0066} has a second positive normalized solution $u_{r,2}$, which is also a mountain-pass solution, with a positive Lagrange multiplier
\begin{eqnarray}\label{eq0099}
\lambda_{r,2}=(1+o_r(1))\bigg[\frac{(6-p)\|w_\infty\|_p^p}{2pr^2}\bigg]^{\frac{2(p-2)}{3p-10}}\to+\infty\quad\text{as }r\to0,
\end{eqnarray}
where $w_\infty$ is the unique (up to translations) positive solution of the following equation:
\begin{eqnarray}\label{eq0096}
\left\{\aligned
&-\Delta w+w=|w|^{p-2}w\quad\text{in }\mathbb{R}^3,\\
&w\in H^1(\bbr^3).
\endaligned\right.
\end{eqnarray}
Moreover,
\begin{eqnarray}\label{eq0098}
w_{r}(x)=\lambda_{r,2}^{-\frac{1}{p-2}}u_{r,2}(\lambda_{r,2}^{-\frac12}x)=w_\infty+o_r(1)\quad\text{in }H^1(\bbr^3)\quad\text{as }r\to0.
\end{eqnarray}
\end{theorem}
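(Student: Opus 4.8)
The plan is to use the same reduction that underlies Theorem~\ref{thm0002}: transform the mass-constrained problem \eqref{eq0066}, in which the frequency $\lambda$ is an unknown, into a one-parameter family of fixed-frequency problems together with a scalar equation (the analogue of \eqref{eqn0020}) that selects the mass. If $(u,\lambda)$ solves \eqref{eq0066}, put $w(y)=\lambda^{-\frac{1}{p-2}}u(\lambda^{-\frac12}y)$; a direct computation shows that $w$ solves
\[
-\Delta w+w+\lambda^{-2}(y_1^2+y_2^2)w=|w|^{p-2}w\quad\text{in }\bbr^3,
\]
while $\|u\|_2^2=r^2$ becomes $\lambda^{\frac{10-3p}{2(p-2)}}\|w\|_2^2=r^2$; conversely, any solution $w$ of the displayed equation satisfying this identity yields, after undoing the scaling, a normalized solution of \eqref{eq0066} with Lagrange multiplier $\lambda$. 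Since $\frac{10-3p}{2(p-2)}<0$ for $p>\frac{10}{3}$, solving \eqref{eq0066} for small $r$ is equivalent to solving the fixed-frequency equation for large $\lambda$, i.e. to a perturbation of the limit equation \eqref{eq0096}, whose unique positive solution $w_\infty$ is nondegenerate up to translations.

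First I would construct, for all $\lambda$ large, a positive mountain-pass solution $w_\lambda$ of the fixed-frequency equation with $w_\lambda\to w_\infty$ in $H^1(\bbr^3)$ as $\lambda\to+\infty$. Because $w_\infty$ decays exponentially, the perturbing term $\lambda^{-2}(y_1^2+y_2^2)w$ is small in the norm of the weighted space adapted to exponentially decaying functions, so the natural tool is a Lyapunov--Schmidt reduction near $w_\infty$. The potential $y_1^2+y_2^2$ breaks the $y_1,y_2$-translation invariance of the limit problem, and the leading term of the reduced energy is $\lambda^{-2}(\xi_1^2+\xi_2^2)\|w_\infty\|_2^2$, minimized at $\xi_1=\xi_2=0$; the residual $y_3$-translation invariance (the potential does not depend on $y_3$) is removed by working in the subspace of functions even in each variable. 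One then obtains $w_\lambda$ for all large $\lambda$, converging to $w_\infty$ and inheriting the mountain-pass (equivalently, ground-state / least-energy) variational characterization of $w_\infty$ for the limiting functional. Alternatively, one can set up the minimax for the energy functional associated with the fixed-frequency equation directly and recover the compactness of the relevant Palais--Smale sequence from a sharp energy estimate: its mountain-pass level, after rescaling, converges to the ground-state level of \eqref{eq0096}.

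Next I would define $\rho(\lambda):=\lambda^{-\frac{3p-10}{4(p-2)}}\|w_\lambda\|_2$; this is continuous in $\lambda$, and since $\|w_\lambda\|_2^2\to\|w_\infty\|_2^2>0$ while the exponent is negative, $\rho(\lambda)\to0^+$ as $\lambda\to+\infty$. Hence for every sufficiently small $r>0$ the identity $\lambda^{\frac{3p-10}{2(p-2)}}=\|w_\lambda\|_2^2/r^2$ is solvable (by a continuity/fixed-point argument) for some $\lambda_{r,2}\to+\infty$; inserting $\|w_{\lambda_{r,2}}\|_2^2=\|w_\infty\|_2^2+o_r(1)$ and the Pohozaev relation $\|w_\infty\|_2^2=\frac{6-p}{2p}\|w_\infty\|_p^p$ (a consequence of the Nehari and Pohozaev identities for \eqref{eq0096}) gives exactly the asymptotics \eqref{eq0099}. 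Setting $u_{r,2}(x):=\lambda_{r,2}^{\frac{1}{p-2}}w_{\lambda_{r,2}}(\lambda_{r,2}^{\frac12}x)$ and $w_r:=w_{\lambda_{r,2}}$ produces a positive normalized solution of \eqref{eq0066} with the stated positive multiplier and the convergence \eqref{eq0098}. Finally I would check that $u_{r,2}$ is genuinely a second solution --- it is distinct from the local-minimizer ground state of \cite{BBJV17}, whose Lagrange multiplier is negative --- and that it realizes the mountain-pass level of the constrained functional on $\{\|u\|_2^2=r^2\}$, by matching the minimax geometry described in \cite{BBJV17} with the mountain-pass characterization of $w_\lambda$ transported through the scaling.

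The main obstacle is the construction in the second paragraph. The potential is unbounded, so one must work in a weighted space and prove that the constructed solution decays fast enough for the perturbation to be genuinely small in the right norm; moreover the limit problem is invariant under the full translation group of $\bbr^3$ while the perturbation pins down only two of the three directions, so the effective (reduced) problem is degenerate in $y_3$ and global Palais--Smale compactness for the fixed-frequency functional fails --- splitting along the $y_3$-axis is not a priori excluded --- which is precisely why the argument must stay local around $w_\infty$ or must lean on a sharp energy estimate at the mountain-pass level. A further delicate point is identifying the solution produced by the reduction with the mountain-pass solution conjectured by Bellazzini et al., which requires a careful comparison of energy levels. Throughout, the uniqueness and nondegeneracy of $w_\infty$ for $2<p<6$ in $\bbr^3$ are used crucially.
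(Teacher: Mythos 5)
Your overall route is the same as the paper's: rescale $(u,\lambda)\mapsto w=\lambda^{-\frac{1}{p-2}}u(\lambda^{-\frac12}\cdot)$ to trade the mass constraint for the fixed-frequency family \eqref{eq0070} plus a scalar equation in $\lambda$, show the branch converges to $w_\infty$ as $\lambda\to+\infty$, and solve the scalar equation by continuity for small $r$; your derivation of \eqref{eq0099} from the Nehari--Pohozaev identity $\|w_\infty\|_2^2=\frac{6-p}{2p}\|w_\infty\|_p^p$ is exactly right (the paper writes the scalar constraint in the Pohozaev-processed form \eqref{eq0067} rather than via $\|w\|_2^2$, but these agree on solutions). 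The one genuine methodological divergence is how the branch $w_\lambda$ is built. You propose a Lyapunov--Schmidt reduction around $w_\infty$ in a weighted space; the paper instead takes $w_t$ to be a Nehari ground state of \eqref{eq0070} for \emph{every} $t>0$ (existence via the symmetrization and compactness lemmas of \cite{BBJV17}, which handle the unpinned $x_3$-direction you correctly flag), proves $\mathfrak{m}(t)\to\mathfrak{m}$ and hence $w_t\to w_\infty$ using the uniform exponential decay to kill $t^{-2}\int V w_t^2$, and then gets \emph{uniqueness} of the ground state for large $t$ from the nondegeneracy of $w_\infty$, which is what supplies the continuity of $t\mapsto\|w_t\|_p^p$ needed for the intermediate-value step. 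Your perturbative branch is automatically continuous, which is cleaner there, but the paper's choice buys something you need later: $w_t$ comes with a ground-state/minimax characterization of $\mathfrak{m}(t)$ for free.

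That is where your sketch has a real gap. A Lyapunov--Schmidt solution is, a priori, just \emph{a} solution near $w_\infty$; to conclude that the rescaled $u_{r,2}$ is a mountain-pass critical point of the constrained functional you must (i) know that $\mathcal{J}_{\lambda}(w_\lambda)$ equals the least-energy/minimax level $\mathfrak{m}(\lambda)$ (not merely that it is close to $\mathfrak{m}$), and (ii) prove the two-sided identity between the constrained mountain-pass level $\alpha(r)$ of \cite{BBJV17} and $\lambda_{r,2}^{\frac{6-p}{2(p-2)}}\mathfrak{m}(\lambda_{r,2})-\frac{\lambda_{r,2}r^2}{2}$. The paper's proof of (ii) occupies half the argument: it rescales arbitrary admissible paths of $\Gamma_r$ into admissible paths for $\mathfrak{m}(\lambda_{r,2})$ to get one inequality, and for the other it builds an explicit path through $u_{r,2}$ using the fibering map $\tau\mapsto\tau^{3/2}u_{r,2}(\tau\cdot)$, showing there is a second critical point $\tau_r\to0$ at which the rescaled function enters a small ball of $\mathcal{S}_r$ where $\mathcal{Y}\lesssim r^2$, so the path can be closed up through the local minimizer $u_{r,1}$. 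Your phrase ``matching the minimax geometry \ldots requires a careful comparison of energy levels'' names the right issue but does not contain the construction; as written, the mountain-pass assertion of the theorem is not yet proved. Either switch to the ground-state branch as in the paper (so (i) is automatic) or add an argument identifying your Lyapunov--Schmidt solution with the ground state of \eqref{eq0070} for large $\lambda$, and in either case supply the explicit path construction for (ii).
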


To prove Theorem~\ref{thm0004}, we apply our new reduction argument to \eqref{eq0066} by reducing finding normalized solutions of \eqref{eq0066} to finding solutions of the following equation:
\begin{eqnarray}\label{eqn0030}
f(r,t):=r^2-t^{\frac{10-3p}{2(p-2)}}(\frac{6-p}{2p}\|w_t\|_p^p-2t^{-2}\int_{\bbr^3}V(x)w_t^2dx),
\end{eqnarray}
where $w_t$ is a positive ground-state of the following equation:
\begin{eqnarray*}
\left\{\aligned
&-\Delta w+w+t^{-2}V(x)w=|w|^{p-2}w\quad\text{in }\mathbb{R}^3,\\
&w\in H^1(\bbr^3).
\endaligned\right.
\end{eqnarray*}
By the uniqueness and nondegeneracy of $w_\infty$, we prove that the curve $w_t$ is continuous for $t>0$ sufficiently large in a suitable space.  Thus, \eqref{eqn0030} can be solved easily by the continuation method.  We believe this method will be helpful in studying normalized solutions of other elliptic equations.

\vskip0.2in

\noindent{\bf\large Notations.} Throughout this paper, $C$ and $C'$ are indiscriminately used to denote various absolutely positive constants.  $a\sim b$ means that $C'b\leq a\leq Cb$ and $a\lesssim b$ means that $a\leq Cb$.

\section{blow-up solutions for $N=3$ and $2<q<4$}
It is well known that the Aubin-Talanti babbles,
\begin{eqnarray}\label{eq0095}
U_\ve(x)=[N(N-2)]^{\frac{N-2}{4}}\bigg(\frac{\ve}{\ve^2+|x|^2}\bigg)^{\frac{N-2}{2}},
\end{eqnarray}
is the only solutions to the following equation:
\begin{eqnarray*}
\left\{\aligned&-\Delta u=u^{2^*-1}\quad\text{in }\bbr^N,\\
&u(0)=\max_{x\in\bbr^N}u(x),\\
&u(x)>0\quad\text{in }\bbr^N,\\
&u(x)\to0\quad\text{as }|x|\to+\infty.
\endaligned\right.
\end{eqnarray*}
By \cite[Theorem~1.2]{WW21}, for $\mu>0$ sufficiently small, \eqref{eqn1001} has a positive radial solution $\widetilde{u}_\mu$ with the Lagrange multiplier $\widetilde{\lambda}_\mu>0$ such that $\ve_{\mu}^{\frac{1}{2}}\widetilde{u}_\mu(\ve_{\mu} x)\to U_{\ve_0}$ strongly in $D^{1,2}(\bbr^3)$ for some $\ve_0>0$ as $\mu\to0$ up to a subsequence,
where $U_{\ve_0}$ is given by \eqref{eq0095} and $\ve_{\mu}$ satisfies
\begin{eqnarray}\label{eq0007}
\mu\sim\left\{\aligned
&\ve_{\mu}^{\frac{q}{2}-1},\quad 3<q<6,\\
&\frac{\ve_{\mu}^{\frac{1}{2}}}{\ln(\frac{1}{\ve_\mu})},\quad q=3,\\
&\ve_\mu^{5-\frac{3q}{2}},\quad 2<q<3.\endaligned\right.
\end{eqnarray}
Moreover, by \cite[Lemma~4.1]{WW21}, we have
\begin{eqnarray}\label{eq0012}
1\sim\left\{\aligned &\frac{\mu\sigma_\mu^{\frac{6-q}{2}}}{\widetilde{\lambda}_{\mu}},\quad 3<q<6,\\
&\frac{\mu \sigma_\mu^{\frac{3}{2}}}{\widetilde{\lambda}_{\mu}}\ln\bigg(\frac{1}{\sqrt{\widetilde{\lambda}_{\mu}}\sigma_\mu}\bigg),\quad q=3,\\
&\frac{\mu \sigma_\mu^{\frac{q}{2}}}{\widetilde{\lambda}_{\mu}^{\frac{5-q}{2}}},\quad 2<q<3.\endaligned\right.
\end{eqnarray}
On the other hand, in the proof of \cite[Proposition~4.2]{WW21}, we also show that
\begin{eqnarray}\label{eq0060}
\sigma_\mu\sim\ve_\mu\quad\text{as}\quad \mu\to0.
\end{eqnarray}
\begin{proposition}\label{propn0001}
Let $\lambda=1$, $N=3$ and $2<q<4$.  Then there exists $t_q>0$, which may depend on $q$, such that \eqref{eq0001} has two positive radial solutions $u_{t,1}$ and $u_{t,2}$ for $t>t_q$, where $u_{t,1}$ is a ground-state with $\|u_{t,1}\|_\infty\sim t^{-\frac{1}{q-2}}$ and $u_{t,2}$ is a blow-up solution with
\begin{eqnarray*}
\|u_{\mu,2}\|_{\infty}\sim\left\{\aligned&t^{\frac{1}{4-q}},\quad 3<q<4,\\
&t\ln t,\quad q=3,\\
&t^{\frac{1}{q-2}},\quad 2<q<3,\endaligned\right.
\end{eqnarray*}
as $t\to+\infty$.
\end{proposition}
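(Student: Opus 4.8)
The plan is to build the two solutions from the two distinct ingredients already prepared in the introduction: a \emph{concentrating} (blow-up) family coming from the normalized solutions $\widetilde u_\mu$ of \cite{WW21} via the scaling/Pohozaev reduction \eqref{eq0003}--\eqref{eq0010}, and a \emph{ground-state} family whose existence for large $t$ is item $(c)$ of Theorem~\ref{thmn0001}. First I would take $\widetilde u_\mu$ together with its Lagrange multiplier $\widetilde\lambda_\mu$ and form $v_\mu(x)=\widetilde\lambda_\mu^{-\frac{N-2}{4}}\widetilde u_\mu(\widetilde\lambda_\mu^{-\frac12}x)$ as in \eqref{eq0003}; by the computation recorded there, $v_\mu$ solves \eqref{eq0001} with $\lambda=1$ at parameter $t=t_\mu:=\mu\widetilde\lambda_\mu^{\frac{q\gamma_q-q}{2}}$. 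So $u_{t,2}:=v_\mu$ with $t=t_\mu$ will be the second solution, provided I verify (i) that $t_\mu\to+\infty$ as $\mu\to0$, so that the family is defined for all large $t$ (and, by continuity/surjectivity of $\mu\mapsto t_\mu$ onto a ray $(t_q,+\infty)$, for every such $t$), and (ii) that $\|v_\mu\|_\infty$ has the claimed asymptotics.

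For step (i)--(ii) the engine is the chain of estimates \eqref{eq0007}, \eqref{eq0012}, \eqref{eq0060}. From $\sigma_\mu\sim\ve_\mu$ and \eqref{eq0012} I would first solve for $\widetilde\lambda_\mu$ in terms of $\mu$ and $\ve_\mu$ in each of the three regimes $3<q<4$, $q=3$, $2<q<3$, then substitute the relation \eqref{eq0007} between $\mu$ and $\ve_\mu$ to get $\widetilde\lambda_\mu$ purely as a power (or power-times-log) of $\mu$. Plugging that into $t_\mu=\mu\widetilde\lambda_\mu^{\frac{q\gamma_q-q}{2}}$ (note $\gamma_q=\frac{N(q-2)}{2q}=\frac{3(q-2)}{2q}$ for $N=3$, so the exponent $\frac{q\gamma_q-q}{2}=\frac{3(q-2)-2q}{4}=\frac{q-6}{4}<0$) shows $t_\mu\to+\infty$; the sign of the net exponent of $\mu$ in $t_\mu$ is exactly what must come out positive, and checking that in all three cases is the bookkeeping core of the argument. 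For the sup-norm: $\widetilde u_\mu$ concentrates like the bubble $U_{\ve_0}$ after the scaling $\ve_\mu^{1/2}\widetilde u_\mu(\ve_\mu\,\cdot)$, hence $\|\widetilde u_\mu\|_\infty\sim\ve_\mu^{-1/2}$; undoing the $v_\mu$-scaling gives $\|v_\mu\|_\infty=\widetilde\lambda_\mu^{-\frac{N-2}{4}}\|\widetilde u_\mu\|_\infty\sim\widetilde\lambda_\mu^{-1/4}\ve_\mu^{-1/2}$ for $N=3$. Re-expressing $\widetilde\lambda_\mu$ and $\ve_\mu$ through $\mu$, and then inverting $t=t_\mu$ to write everything in $t$, should reproduce $t^{\frac{1}{4-q}}$, $t\ln t$, $t^{\frac{1}{q-2}}$ respectively — and I would present this as a short lemma collecting the three power computations rather than inline.

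For the first solution $u_{t,1}$, I would invoke Theorem~\ref{thmn0001}$(c)$ to get, for $t$ large, a positive radial ground-state of \eqref{eq0001}; its sup-norm scaling $\|u_{t,1}\|_\infty\sim t^{-\frac{1}{q-2}}$ follows from the standard subcritical-type scaling $u(x)=t^{-\frac{1}{q-2}}U(x)$ that balances the terms $\lambda u$ and $t|u|^{q-2}u$ (equivalently, from the energy estimate $m(t)\to0$ and the Pohozaev/Nehari identities, which force the concentration scale $t^{-\frac{1}{q-2}}$ in the regime where the $|u|^{q-2}u$ term dominates the critical term). The two solutions are genuinely distinct because their sup-norms diverge in opposite directions as $t\to+\infty$ ($\|u_{t,1}\|_\infty\to0$ while $\|u_{t,2}\|_\infty\to+\infty$), so for $t$ beyond some $t_q$ they cannot coincide. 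The main obstacle I anticipate is not conceptual but the careful matching of exponents: one must push the asymptotic relations of \cite{WW21} through two successive rescalings and one inversion $\mu\leftrightarrow t$ without losing track of which quantities are $\sim$ and which carry logarithmic corrections (the $q=3$ borderline case), and one must make sure the map $\mu\mapsto t_\mu$ is continuous and eventually covers a full ray so that "for all $t>t_q$" — rather than merely "for a sequence $t\to\infty$" — is legitimate; I would secure that last point from the continuity of the normalized branch in \cite{WW21} together with $t_\mu\to\infty$.
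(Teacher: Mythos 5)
Your proposal follows essentially the same route as the paper: the blow-up family $u_{t,2}$ is exactly the rescaled normalized solution $\widetilde v_\mu$ obtained from \eqref{eq0003}--\eqref{eq0010}, with the asymptotics of $\widetilde\lambda_\mu$, $\widetilde t_\mu$ and $\widetilde v_\mu(0)$ extracted from \eqref{eq0007}, \eqref{eq0012}, \eqref{eq0060} in the three regimes, and the two solutions are distinguished by their opposite sup-norm behavior. The only substantive difference is that the paper does not rederive $\|u_{t,1}\|_\infty\sim t^{-\frac{1}{q-2}}$ from a scaling heuristic but cites \cite[Theorem~2.2]{MM21} for this asymptotic profile of the ground-state; if you do not invoke that result, this step needs an actual proof rather than the balancing argument you sketch.
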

\begin{proof}
By \eqref{eq0003} and \eqref{eq0010}, $(\widetilde{v}_\mu, \widetilde{t}_\mu)$ is a solution of \eqref{eq0005}.  In particular, $\widetilde{v}_\mu$ is a solution of \eqref{eq0001} for $\lambda=1$ and $t=\widetilde{t}_\mu=\mu\widetilde{\lambda}_\mu^{\frac{q\gamma_q-q}{2}}$.  By the well-known Gidas-Ni-Nirenberg theorem \cite{GNN81}, $\widetilde{v}_{\mu}$ is radial and decreasing for $r=|x|$  up to translations.  Thus, without loss of generality, we may assume that $\widetilde{v}_{\mu}(0)=\max_{x\in\bbr^N}\widetilde{v}_{\mu}$.  Recall that $\ve_{\mu}^{\frac{1}{2}}\widetilde{u}_\mu(\ve_{\mu} x)\to U_{\ve_0}$ strongly in $D^{1,2}(\bbr^3)$ for some $\ve_0>0$ as $\mu\to0$ up to a subsequence, by the classical elliptic regularity and the Sobolev embedding theorem, $\ve_{\mu}^{\frac{1}{2}}\widetilde{u}_\mu(\ve_{\mu} x)\to U_{\ve_0}$ strongly in $C_{loc}^{1,\alpha}(\bbr^3)$ for some $\alpha\in(0, 1)$ as $\mu\to0$ up to a subsequence.  In particular, $\ve_{\mu}^{\frac{1}{2}}\widetilde{u}_\mu(0)\to U_{\ve_0}(0)$ as $\mu\to0$ up to a subsequence.  Thus, by \eqref{eq0095},
\begin{eqnarray}\label{eq0008}
\widetilde{v}_{\mu}(0)=\widetilde{\lambda}_\mu^{-\frac{1}{4}}\widetilde{u}_\mu(0)\sim\widetilde{\lambda}_\mu^{-\frac{1}{4}}\ve_\mu^{-\frac{1}{2}}\quad\text{as }\mu\to0\text{ up to a subsequence}.
\end{eqnarray}
In the following, let us estimates $\widetilde{v}_{\mu}(0)$ and $\widetilde{t}_\mu$ as $\mu\to0$.  We begin with the estimate of $\widetilde{t}_\mu$.  We first consider the case $2<q<3$. In this case, by \eqref{eq0007}, \eqref{eq0012} and \eqref{eq0060}, $\widetilde{\lambda}_\mu\sim\ve_\mu^2,$
which, together with \eqref{eq0010}, implies
\begin{eqnarray*}
\widetilde{t}_\mu\sim\ve_\mu^{\frac{10-3q}{2}}(\ve_\mu^2)^{\frac{q-6}{4}}=\ve_\mu^{2-q}\to+\infty\quad\text{as }\mu\to0.
\end{eqnarray*}
For $q=3$, by \eqref{eq0007}, \eqref{eq0012} and \eqref{eq0060},
\begin{eqnarray*}
\widetilde{\lambda}_\mu\sim\ve_\mu^2\frac{\ln(\frac{1}{\sqrt{\widetilde{\lambda}_\mu}\ve_\mu})}{\ln(\frac{1}{\ve_\mu})}\gtrsim\ve_\mu^2.
\end{eqnarray*}
It follows that
\begin{eqnarray*}
\ln(\frac{1}{\ve_\mu})\lesssim\ln(\frac{1}{\sqrt{\widetilde{\lambda}_\mu}\ve_\mu})\lesssim\ln(\frac{1}{\ve_\mu}).
\end{eqnarray*}
Thus, we also have $\widetilde{\lambda}_\mu\sim\ve_\mu^2$ for $q=3$.
By \eqref{eq0010} and \eqref{eq0007},
\begin{eqnarray*}
\widetilde{t}_\mu\sim\ve_\mu^{\frac{1}{2}}\frac{1}{\ln(\frac{1}{\ve_\mu})}(\ve_\mu^2)^{-\frac{3}{4}}=\ve_\mu^{-1}\frac{1}{\ln(\frac{1}{\ve_\mu})}\to+\infty\quad\text{as }\mu\to0.
\end{eqnarray*}
For $3<q<4$, by \eqref{eq0007}, \eqref{eq0012} and \eqref{eq0060}, $\widetilde{\lambda}_\mu\sim\ve_\mu^2.$
Now, by \eqref{eq0010},
\begin{eqnarray*}
\widetilde{t}_\mu\sim\ve_\mu^{\frac{q-2}{2}}(\ve_\mu^2)^{\frac{q-6}{4}}=\ve_\mu^{q-4}\to+\infty\quad\text{as }\mu\to0.
\end{eqnarray*}
Thus, for all $2<q<4$, we always have
\begin{eqnarray}\label{eq0014}
\widetilde{\lambda}_\mu\sim\ve_\mu^2\quad\text{and}\quad\widetilde{t}_\mu\sim\left\{\aligned&\ve_\mu^{q-4},\quad 3<q<4,\\
&\ve_\mu^{-1}\frac{1}{\ln(\frac{1}{\ve_\mu})},\quad q=3,\\
&\ve_\mu^{2-q},\quad 2<q<3,
\endaligned\right.
\end{eqnarray}
as $\mu\to0$.
Now, by \eqref{eq0007}--\eqref{eq0060} and \eqref{eq0008}, we have
\begin{eqnarray*}
\widetilde{v}_{\mu}(0)\sim\left\{\aligned&\mu^{-\frac{2}{q-2}},\quad 3<q<4,\\
&\bigg(\frac{1}{\mu|\ln\mu|}\bigg)^2,\quad q=3,\\
&\mu^{-\frac{2}{10-3q}},\quad 2<q<3.\endaligned\right.
\end{eqnarray*}
It follows from \eqref{eq0007} and \eqref{eq0014} that
\begin{eqnarray*}
\widetilde{v}_{\mu}(0)\sim\left\{\aligned&\widetilde{t}_\mu^{\frac{1}{4-q}},\quad 3<q<4,\\
&\widetilde{t}_\mu\ln \widetilde{t}_\mu,\quad q=3,\\
&\widetilde{t}_\mu^{\frac{1}{q-2}},\quad 2<q<3.\endaligned\right.
\end{eqnarray*}
Thus, by \eqref{eq0014}, $\widetilde{v}_{\mu}$ is a blow-up solution of \eqref{eq0001} for $N=3$, $\lambda=1$, $2<q<4$ and $t=\widetilde{t}_\mu$.  Note that by \cite[Theorem~2.2]{MM21}, the ground-states of \eqref{eq0001} for $\lambda=1$, say $\overline{v}_{t}$, satisfies $\|\overline{v}_{t}\|_\infty\sim t^{-\frac{1}{q-2}}$ as $t\to+\infty$.  For $\mu>0$ sufficiently small, $\widetilde{v}_{\mu}$ is a second positive radial solution of \eqref{eq0001} with $N=3$, $\lambda=1$, $2<q<4$ and $t>0$ sufficiently large.
\end{proof}

\begin{remark}
Let $\widetilde{v}_\mu$ be given in the proof of Proposition~\ref{propn0001} and define
\begin{eqnarray*}\label{eq0018}
\widetilde{w}_{\mu}(x)=\widetilde{t}_\mu^{\frac{1}{q-2}}\widetilde{v}_\mu(x),
\end{eqnarray*}
then $\widetilde{w}_{\mu}$ satisfies the following equation:
\begin{eqnarray}\label{eq0024}
\left\{\aligned
&-\Delta w+w=|w|^{q-2}w+\widetilde{t}_\mu^{-\frac{2^*-2}{q-2}}|w|^{2^*-2}w\quad\text{in }\mathbb{R}^N,\\
&v\in H^1(\bbr^N),
\endaligned\right.
\end{eqnarray}
where $t=\widetilde{t}_\mu$ is also given in the proof of Proposition~\ref{propn0001}.  By similar arguments as that used for \cite[Lemma~5.3]{DPG13}, \eqref{eq0024} has a unique bounded positive radial solution for $t>0$ sufficiently large.
However, by \eqref{eq0010} and \eqref{eq0008},
\begin{eqnarray}\label{eq0011}
\widetilde{w}_{\mu}(0)\sim\mu^{\frac{1}{q-2}}\widetilde{\lambda}_\mu^{-\frac{1}{q-2}}\ve_\mu^{-\frac{1}{2}}\quad\text{as }\mu\to0.
\end{eqnarray}
By \eqref{eq0007}, \eqref{eq0014} and \eqref{eq0011},
\begin{eqnarray*}
\widetilde{w}_{\mu}(0)\sim\left\{\aligned&\ve_\mu^{-\frac{2}{q-2}},\quad 2<q<3,\\
&\ve_\mu^{-2}\frac{1}{\ln(\frac{1}{\ve_\mu})},\quad q=3,\\
&\ve_\mu^{-\frac{2}{q-2}},\quad 3<q<4.
\endaligned\right.
\end{eqnarray*}
Thus, $\widetilde{w}_{\mu}$ is also a blow-up solution of \eqref{eq0024} as $\widetilde{t}_\mu\to+\infty$.
\end{remark}

\section{Ground-states for $N=3$ and $2<q\leq 4$}
The associated fibering map of \eqref{eqn0010} for every $v\not=0$ in $H^1(\bbr^3)$ is given by
\begin{eqnarray}\label{eq0075}
E(s)=\frac{s^2}{2}(\|\nabla v\|_2^2+\|v\|_2^2)-\frac{ts^q}{q}\|v\|_q^q-\frac{s^6}{6}\|v\|_6^6.
\end{eqnarray}
Since $q>2$, it is standard to show that for every $v\not=0$ in $H^1(\bbr^3)$, there exists a unique $s_0>0$ such that $E(s)$ is strictly increasing for $0<s<s_0$ and strictly decreasing for $s>s_0$.
\begin{lemma}\label{lem0001}
Let $N=3$, $\lambda=1$ and $2<q\leq4$.  Then $m(t)=\frac{1}{3}S^{\frac{3}{2}}$ for $t>0$ sufficiently small, where $m(t)$ is given by \eqref{eqnewnew0009}.
\end{lemma}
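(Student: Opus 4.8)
The plan is to show that for small $t>0$ the ground-state energy $m(t)$ coincides with the energy of the Aubin--Talanti bubble, namely $\frac13 S^{3/2}$, where $S$ is the best Sobolev constant in $\bbr^3$. The inequality $m(t)\le \frac13 S^{3/2}$ is the easy half and holds for \emph{all} $t>0$: test $\mathcal{E}_t$ along the Nehari fiber through a truncated, rescaled bubble $U_{\ve}$ (multiplied by a fixed cut-off, as in Br\'ezis--Nirenberg), and note that the term $-\frac{t}{q}\|v\|_q^q<0$ only lowers the maximal value of the fibering map $E(s)$ below $\frac13 S^{3/2}$; letting $\ve\to0$ (or using the exact bubble after the reduction to $D^{1,2}$) gives $m(t)\le\frac13 S^{3/2}$. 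So the content is the reverse inequality $m(t)\ge\frac13 S^{3/2}$ for $t$ small.

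For the lower bound I would argue by contradiction: suppose there is a sequence $t_n\to0$ and $v_n\in\mathcal{N}_{t_n}$ with $\mathcal{E}_{t_n}(v_n)=m(t_n)<\frac13 S^{3/2}$. By Ekeland's variational principle one may assume $v_n$ is a $(PS)_{m(t_n)}$ sequence for $\mathcal{E}_{t_n}$; standard arguments (using $2<q\le 4<6$ and the Nehari constraint) show $\{v_n\}$ is bounded in $H^1(\bbr^3)$ and, working with radial functions / after symmetrization, $v_n\rightharpoonup v_0$ weakly. Because $t_n\to0$, the limit $v_0$ solves the pure critical equation $-\Delta v_0+v_0=|v_0|^{2^*-2}v_0$ or $v_0\equiv 0$; the former is impossible (that equation has no nontrivial $H^1$ solution since $\|v_0\|_2^2>0$ would contradict the Pohozaev identity), so $v_0=0$. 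Then $\|v_n\|_q^q\to0$ and $t_n\|v_n\|_q^q\to 0$, so the Nehari identity forces $\|\nabla v_n\|_2^2+\|v_n\|_2^2=\|v_n\|_{2^*}^{2^*}+o(1)$ and $\mathcal{E}_{t_n}(v_n)=\frac13\|v_n\|_{2^*}^{2^*}+o(1)\to m:=\lim m(t_n)\le\frac13 S^{3/2}$. Now the Sobolev inequality $\|v_n\|_{2^*}^{2^*}\le S^{-2^*/2}\|\nabla v_n\|_2^{2^*}\le S^{-2^*/2}(\|v_n\|_{2^*}^{2^*}+o(1))^{2^*/2}$ yields, once $\liminf\|v_n\|_{2^*}^{2^*}>0$, that $\liminf\|v_n\|_{2^*}^{2^*}\ge S^{3/2}$, hence $m\ge\frac13 S^{3/2}$. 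Combined with the upper bound this gives $m(t_n)\to\frac13 S^{3/2}$; to upgrade this to $m(t_n)=\frac13 S^{3/2}$ for all small $n$, note that $t\mapsto m(t)$ is nonincreasing (more $t$ means a more negative subcritical term, lowering every fiber maximum) and $m(t)\le\frac13 S^{3/2}$ always, so once we know the limit equals $\frac13 S^{3/2}$ and $m$ is monotone and bounded above by that value, we conclude $m(t)\equiv\frac13 S^{3/2}$ for $0<t$ below some threshold — more precisely, if $m(t_0)<\frac13 S^{3/2}$ for some small $t_0$ then $m(t)\le m(t_0)<\frac13 S^{3/2}$ for all $t\ge t_0$, contradicting $m(t_n)\to\frac13 S^{3/2}$ along the subsequence $t_n\to0$. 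The only remaining case, $\liminf\|v_n\|_{2^*}^{2^*}=0$, gives $m=0$, again contradicting $m(t_n)\to\frac13 S^{3/2}>0$ (equivalently contradicting the trivial lower bound $m(t)>0$ coming from the Nehari structure).

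The main obstacle I expect is the compactness/vanishing dichotomy in the last step: ruling out that the minimizing sequences $v_n$ vanish or split into pieces, and in particular making precise that $v_n$ (after the weak limit is shown to be $0$) genuinely behaves like a single concentrating bubble so that $\|v_n\|_{2^*}^{2^*}\to S^{3/2}$ rather than degenerating. Here the earlier analysis in Section~2 and the estimates imported from \cite{WW21} — which already produce a concentrating family of solutions with bubble profile $U_{\ve_0}$ — should be invoked to identify the concentration scale and confirm the energy level; alternatively one can quote the standard concentration-compactness argument of Lions together with the fact that the only nonzero profile available when $t\to0$ is the Aubin--Talanti bubble. Once the level is pinned at $\frac13 S^{3/2}$, the monotonicity of $m(t)$ closes the argument for a whole interval $(0,t_q^*)$ rather than just a sequence.
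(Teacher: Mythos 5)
Your argument has a genuine logical gap at its final step, and the gap is not repairable within the soft framework you use. What you actually prove is that $\lim_{t\to 0^+}m(t)=\frac13 S^{3/2}$: assuming $m(t_n)<\frac13 S^{3/2}$ along $t_n\to0$, you correctly deduce (via the attained radial minimizers, the compact radial embedding forcing $\|v_n\|_q\to0$, the Nehari/Pohozaev identities, and the Sobolev inequality) that $m(t_n)\to\frac13 S^{3/2}$. But this is \emph{not} a contradiction with $m(t_n)<\frac13 S^{3/2}$ for every $n$. Your attempted monotonicity fix runs the wrong way: if $m(t_0)<\frac13 S^{3/2}$ and $t_n\to0$, then eventually $t_n<t_0$ and monotonicity only gives $m(t_n)\geq m(t_0)$, which is perfectly consistent with $m(t_n)\nearrow\frac13 S^{3/2}$ strictly from below. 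A decisive sanity check: nowhere in your argument do you use $q\leq4$ in an essential way, so the same reasoning would ``prove'' $m(t)=\frac13 S^{3/2}$ for small $t$ when $N=3$ and $4<q<6$ --- which is false, since there $m(t)<\frac13 S^{3/2}$ for \emph{all} $t>0$ (Theorem~1.1(b)), even though $m(t)\to\frac13 S^{3/2}$ as $t\to0$ still holds.

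The missing idea is the quantitative blow-up analysis that the paper carries out. After identifying $v_{t_n}$ as a Sobolev-minimizing sequence and rescaling $w_{t_n}(x)=\sigma_n^{1/2}v_{t_n}(\sigma_n x)\to U_{\ve_*}$ in $D^{1,2}$, the paper establishes sharp \emph{two-sided} pointwise decay bounds on $w_{t_n}$ (an upper bound $w_{t_n}\lesssim(1+r^2)^{-1/2}$ via an ODE comparison, and a lower bound $w_{t_n}\gtrsim r^{-1}e^{-\sigma_n r}$ via the maximum principle), which pin down $\|w_{t_n}\|_q^q\sim\sigma_n^{q-3}$, $|\ln\sigma_n|$, or $1$ according to $q$. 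Feeding this into the Pohozaev identity $\sigma_n^2\|w_{t_n}\|_2^2=(1-\gamma_q)t_n\sigma_n^{3-q/2}\|w_{t_n}\|_q^q$ yields $t_n\sim\sigma_n^{(2-q)/2}$, $\sigma_n^{-1/2}|\ln\sigma_n|^{-1}$, or $\sigma_n^{(q-4)/2}$, and it is exactly the restriction $2<q\leq4$ (in dimension $3$) that makes these quantities bounded away from $0$ as $\sigma_n\to0$, contradicting $t_n\to0$. This hard estimate is where the hypothesis of the lemma enters and is what your proposal must supply to close the argument.
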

\begin{proof}
We argue in the contrary by supposing that there exists $t_n\to0$ as $n\to\infty$ such that $m(t_n)<\frac{1}{3}S^{\frac{3}{2}}$.  Then, it is standard to show (cf.\cite{ASM12}) that $m(t_n)$ is attained by a positive and radial function, which is also a solution of \eqref{eq0001} with $\lambda=1$, $N=3$ and $t=t_n$.  We denote this solution by $v_{t_n}$.  Since $t_n\to0$ as $n\to\infty$, it is also standard to show that
\begin{eqnarray}\label{eq0065}
\|\nabla v_{t_n}\|_2^2=\|v_{t_n}\|_6^6+o_n(1)=S^{\frac{3}{2}}+o_n(1)\quad\text{as }n\to\infty.
\end{eqnarray}
Thus, $\{v_{t_n}\}$ is a minimizing sequence of the Sobolev inequality.  By Lions' result (cf. \cite[Theorem~1.41]{W96}), up to a subsequence, there exists $\sigma_n>0$ such that for some $\ve_*>0$,
\begin{eqnarray*}
w_{t_n}(x)=\sigma_n^{\frac{1}{2}}v_{t_n}(\sigma_n x)\to U_{\varepsilon_*}\text{ strongly in }D^{1,2}(\bbr^3)\text{ as }n\to\infty.
\end{eqnarray*}
Clearly, by direct computations, we know that $w_{t_n}$ satisfies the following equation:
\begin{eqnarray}\label{eq0021}
-\Delta w_{t_n}+\sigma_n^2w_{t_n}=t_n\sigma_n^{3-\frac{q}{2}}w_{t_n}^{q-1}+w_{t_n}^{5}\quad\text{in }\mathbb{R}^3.
\end{eqnarray}
Since $v_{t_n}$ is positive and radial, $w_{t_n}$ is also positive and radial.  Thus, by the boundedness of $\{w_{t_n}\}$ in $D^{1,2}(\bbr^3)$, the Sobolev embedding theorem and Struss's radial lemma (cf. \cite[Lemma
A.2]{BL83}),
\begin{eqnarray*}
w_{t_n}\lesssim r^{-\frac{1}{2}}\quad\text{for all }r\geq1\text{ uniformly as }n\to\infty.
\end{eqnarray*}
On the other hand, since $w_{t_n}\to U_{\varepsilon_*}$ strongly in $D^{1,2}(\bbr^3)$ as $n\to\infty$, by applying the Moser iteration in a standard way and using the Sobolev embedding theorem, we know that $w_{t_n}\to U_{\varepsilon_*}$ strongly in $C_{loc}^{1,\alpha}(\bbr^3)$ as $n\to\infty$ for some $\alpha\in(0, 1)$.  Thus,
\begin{eqnarray*}
w_{t_n}\lesssim (1+r)^{-\frac{1}{2}}\quad\text{for all }r\geq0\text{ uniformly as }n\to\infty.
\end{eqnarray*}
Now, we can adapt the ODE's argument in \cite{AP86,GS03,KP89} as that in the proof of \cite[Lemma~4.1]{WW21} to obtain
\begin{eqnarray}\label{eq0020}
w_{t_n}\lesssim\frac{1}{(1+r^2)^{\frac12}}\quad\text{for all }r\geq0\text{ uniformly as }n\to\infty.
\end{eqnarray}
On the other hand, since $N=3$, it is easy to check that $r^{-1}e^{-\sigma_nr}$ is a subsolution of $-\Delta u+\sigma_n^2u=0$ for $r\geq1$.  Thus, by the fact that $w_{t_n}\to U_{\varepsilon_*}$ strongly in $C_{loc}^{1,\alpha}(\bbr^3)$ as $n\to\infty$ for some $\alpha\in(0, 1)$, we can use the maximum principle in a standard way to show that
\begin{eqnarray*}
w_{t_n}\gtrsim r^{-1}e^{-\sigma_nr}\quad \text{for }r\geq1\text{ uniformly as }n\to\infty.
\end{eqnarray*}
It follows that
\begin{eqnarray}\label{eq0061}
\|w_{t_n}\|_q^q\gtrsim\int_1^{\frac{1}{\sigma_n}}r^{2-q}e^{-q\sigma_nr}dr\sim\left\{\aligned&\sigma_n^{q-3},\quad2\leq q<3,\\
&|\ln\sigma_n|,\quad q=3,\\
&1,\quad 3<q<6.
\endaligned\right.
\end{eqnarray}
Since $t_n\to0$ as $n\to\infty$, by \eqref{eq0020}, for $r\gtrsim\bigg(\frac{1}{\sigma_n}\bigg)^{\frac12}$, \eqref{eq0021} reads as
\begin{eqnarray*}
-\Delta w_{t_n}+\frac{1}{4}\sigma_n^2w_{t_n}\leq0\quad\text{in }\mathbb{R}^3.
\end{eqnarray*}
Thus, by \eqref{eq0020}, we can use the maximum principle in a standard way again to obtain
\begin{eqnarray*}
w_{t_n}\lesssim r^{-1}e^{-\frac{\sigma_n}{4}r}\quad \text{for }r\gtrsim\bigg(\frac{1}{\sigma_n}\bigg)^{\frac12}\text{ uniformly as }n\to\infty.
\end{eqnarray*}
On the other hand, since $\|w_{t_n}\|_6^6=\|v_{t_n}\|_6^6=S^{\frac{3}{2}}+o_n(1)$, by \eqref{eq0040} and the H\"older inequality,
\begin{eqnarray*}
\sigma_n^2\|w_{t_n}\|_2^2\lesssim t_n\sigma_n^{3-\frac{q}{2}}\|w_{t_n}\|_q^q\lesssim t_n\sigma_n^{3-\frac{q}{2}}\|w_{t_n}\|_2^{\frac{6-q}{2}},
\end{eqnarray*}
which implies
\begin{eqnarray*}
\sigma_n\|w_{t_n}\|_2\lesssim t_n^{\frac{2}{q-2}}.
\end{eqnarray*}
Since $w_{t_n}\to U_{\ve_*}$ strongly in $D^{1,2}(\bbr^3)$ as $n\to\infty$ and $U_{\ve_*}\not\in L^2(\bbr^3)$, by the Fatou lemma,
\begin{eqnarray*}
\liminf_{n\to\infty}\|w_{t_n}\|_2=+\infty.
\end{eqnarray*}
Thus, by $t_n\to0$ as $n\to\infty$, we have $\sigma_n\to0$ as $n\to\infty$.
It follows from \eqref{eq0020} once more that
\begin{eqnarray}\label{eq0062}
\|w_t\|_q^q\lesssim1+\int_1^{\frac{1}{\sigma_n}}r^{2-q}dr+\int_{(\frac{1}{\sigma_n})^{\frac{1}{2}}}^{+\infty}r^{2-q}e^{-\frac{q}{4}\sigma_nr}dr\sim\left\{\aligned&\sigma_n^{q-3},\quad2\leq q<3,\\
&|\ln\sigma_n|,\quad q=3,\\
&1,\quad 3<q<6.
\endaligned\right.
\end{eqnarray}
Thus, by \eqref{eq0061} and \eqref{eq0062}, we have
\begin{eqnarray}\label{eq0063}
\|w_{t_n}\|_q^q\sim\left\{\aligned&\sigma_n^{q-3},\quad2\leq q<3,\\
&|\ln\sigma_n|,\quad q=3,\\
&1,\quad 3<q<6.
\endaligned\right.
\end{eqnarray}
Note that as that of \eqref{eq0002}, by the Pohozaev identity, we have
\begin{eqnarray}\label{eq0040}
\sigma_n^2\|w_{t_n}\|_2^2=(1-\gamma_q)t_n\sigma_n^{3-\frac{q}{2}}\|w_{t_n}\|_q^q.
\end{eqnarray}
Thus, by \eqref{eq0063},
\begin{eqnarray*}
\sigma_n\sim \left\{\aligned&t_n\sigma_n^{\frac{q}{2}},\quad2< q<3,\\
&t_n\sigma_n^{\frac{3}{2}}|\ln\sigma_n|,\quad q=3,\\
&t_n\sigma_n^{3-\frac{q}{2}},\quad 3<q<6,
\endaligned\right.
\end{eqnarray*}
which implies
\begin{eqnarray}\label{eq0041}
t_n\sim \left\{\aligned&\sigma_n^{\frac{2-q}{2}},\quad2< q<3,\\
&\sigma_n^{-\frac{1}{2}}\frac{1}{|\ln\sigma_n|},\quad q=3,\\
&\sigma_n^{\frac{q-4}{2}},\quad 3<q<6.
\endaligned\right.
\end{eqnarray}
\eqref{eq0041} contradicts the facts that $t_n,\sigma_n\to0$ as $n\to\infty$ for $2<q\leq4$.  It follows that $m(t)\geq\frac{1}{3}S^{\frac{3}{2}}$ for $t>0$ sufficiently small in the case of $2<q\leq4$.  On the other hand, since $m(t)$ is the minimum of $\mathcal{E}_t(v)$ on the Nehari manifold $\mathcal{N}_{t}$, it is standard (cf. \cite[Lemma~3.3]{WW21}) to use the fibering maps~\eqref{eq0075} to show that $m(t)$ is nonincreasing for $t>0$.  Note that it is well known that $m(0)=\frac{1}{3}S^{\frac{3}{2}}$, thus, $m(t)\leq\frac{1}{3}S^{\frac{3}{2}}$ for all $t>0$.  It follows that $m(t)=\frac{1}{3}S^{\frac{3}{2}}$ for $t>0$ sufficiently small in the case of $2<q\leq4$.
\end{proof}

Let
\begin{eqnarray}\label{eqn0090}
t_q^*=\sup\{t>0\mid m_t=\frac{1}{3}S^{\frac{3}{2}}\}.
\end{eqnarray}
Then by Lemma~\ref{lem0001}, $t_q^*>0$ for $2<q\leq4$.  Since it is well known (cf. \cite{ASM12}) that $m(t)<\frac{1}{3}S^{\frac{3}{2}}$ for $t>0$ sufficiently large in the case of $2<q\leq4$, we have $0<t_q^*<+\infty$ for all $2<q\leq4$.  Since $m(t)<\frac{1}{3}S^{\frac{3}{2}}$ for $t>t_q^*$, it is standard (cf. \cite{ASM12}) to show that $m(t)$ is attained for $t>t_q^*$.  Let $v_t$ be a ground-state of \eqref{eq0001}, which is radial and positive for $t>t_q^*$ in the case of $2<q<4$.  Then, we have the following.
\begin{proposition}\label{prop0001}
Let $N=3$, $\lambda=1$ and $2<q<4$.  Then, $\|v_t\|_q^q\sim1$ as $t\to t_q^*$.
\end{proposition}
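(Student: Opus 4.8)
The plan is to argue by contradiction and split into the two degenerate regimes. Suppose $\|v_t\|_q^q\not\sim 1$ as $t\to t_q^*$; then along some sequence $t_n\to t_q^*$ we either have $\|v_{t_n}\|_q^q\to 0$ or $\|v_{t_n}\|_q^q\to+\infty$ (the latter after noting $\|v_{t_n}\|_q^q$ stays bounded away from $0$ forces only the upper direction to fail). First I would record the structural identities available on the ground-state: since $v_t\in\mathcal{N}_t$ and $v_t$ solves \eqref{eq0001}, combining $\mathcal{E}_t'(v_t)v_t=0$ with the Pohozaev identity (as in \eqref{eq0002}, \eqref{eq0040}) gives two independent linear relations among $\|\nabla v_t\|_2^2$, $\|v_t\|_2^2$, $\|v_t\|_q^q$, $\|v_t\|_{2^*}^{2^*}$. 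Together with $m(t)=\mathcal{E}_t(v_t)$ and the monotone, continuous dependence of $m(t)$ on $t$ (the fibering-map argument already cited), this pins down all four quantities in terms of $t$ and $\|v_t\|_q^q$ alone. In particular $m(t_n)\to m(t_q^*)=\tfrac13 S^{3/2}$ by continuity of $m$, so the ``energy defect'' $\tfrac13 S^{3/2}-m(t_n)\to 0$, and from the identities this defect is comparable to a positive combination of $t_n\|v_{t_n}\|_q^q$ and $\|v_{t_n}\|_2^2$; hence both $t_n\|v_{t_n}\|_q^q\to 0$ and $\|v_{t_n}\|_2^2\to 0$.

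Next I would rule out $\|v_{t_n}\|_q^q\to 0$. If this held, then from $\mathcal{E}_t'(v_{t_n})v_{t_n}=0$ the pair $(\|\nabla v_{t_n}\|_2^2,\|v_{t_n}\|_{2^*}^{2^*})$ would converge to $(S^{3/2},S^{3/2})$, so $v_{t_n}$ would be a minimizing sequence for the Sobolev quotient, and by Lions' concentration-compactness (exactly as in the proof of Lemma~\ref{lem0001}) a rescaling $w_{t_n}(x)=\sigma_n^{1/2}v_{t_n}(\sigma_n x)$ converges strongly in $D^{1,2}(\bbr^3)$ to an Aubin--Talanti bubble $U_{\ve_*}$, with $w_{t_n}$ solving \eqref{eq0021}. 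Then the whole machinery of Lemma~\ref{lem0001} applies verbatim: the uniform decay estimate $w_{t_n}\lesssim (1+r^2)^{-1/2}$, the two-sided bound $\|w_{t_n}\|_q^q\sim\sigma_n^{q-3}$ (resp.\ $|\ln\sigma_n|$ for $q=3$), the Pohozaev relation \eqref{eq0040}, and finally the forced scaling relation \eqref{eq0041} between $t_n$ and $\sigma_n$. The only difference from Lemma~\ref{lem0001} is that now $t_n\to t_q^*>0$ rather than $t_n\to 0$; but \eqref{eq0041} then forces $\sigma_n$ to converge to a positive finite limit $\sigma_*$, whereas the argument that $\sigma_n\to 0$ (from $\sigma_n\|w_{t_n}\|_2\lesssim t_n^{2/(q-2)}$ bounded and $\|w_{t_n}\|_2\to\infty$ by Fatou, since $U_{\ve_*}\notin L^2$) still goes through and is a contradiction. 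This kills the case $\|v_{t_n}\|_q^q\to 0$.

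Finally I would rule out $\|v_{t_n}\|_q^q\to+\infty$. Here I would use the blow-up solution $\widetilde{v}_\mu$ of Proposition~\ref{propn0001} only as a foil: the point is that the \emph{ground-state} $v_t$ satisfies $\|v_t\|_\infty\sim t^{-1/(q-2)}$ as $t\to+\infty$ (the estimate cited from \cite{MM21}), and more to the point, for $t$ bounded in a neighborhood of $t_q^*$ the ground-state energy $m(t)<\tfrac13 S^{3/2}$ is bounded away from $0$; rescaling by $s$ in the fibering map \eqref{eq0075} and using $\|v_{t_n}\|_2^2\to 0$ together with $t_n\|v_{t_n}\|_q^q\to 0$ already derived would force $\mathcal{E}_{t_n}(v_{t_n})\to\tfrac13 S^{3/2}$ along a subsequence anyway — but in fact the cleanest route is: from the two identities, $\|v_{t_n}\|_2^2$ is comparable to $t_n\|v_{t_n}\|_q^q$ up to the defect, so $t_n\|v_{t_n}\|_q^q\to 0$ forces $\|v_{t_n}\|_q^q$ bounded (since $t_n\to t_q^*>0$), contradicting $\|v_{t_n}\|_q^q\to+\infty$ outright. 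Thus $\|v_{t_n}\|_q^q$ is bounded above and, by the same comparison, bounded below away from $0$, giving $\|v_t\|_q^q\sim 1$.

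\textbf{Main obstacle.} The delicate point is the first case: transplanting the decay estimates and the Pohozaev/scaling bookkeeping of Lemma~\ref{lem0001} to a sequence with $t_n\to t_q^*>0$ instead of $t_n\to 0$, and checking that the step ``$\sigma_n\to 0$'' — which in Lemma~\ref{lem0001} crucially used $t_n\to 0$ — still survives (it does, because it only uses $t_n$ bounded, $\|w_{t_n}\|_2\to\infty$, and $\sigma_n\|w_{t_n}\|_2$ bounded), so that the scaling relation \eqref{eq0041} is again contradictory. Verifying that $\mathcal{E}_t'(v_{t_n})v_{t_n}=0$ plus $m(t_n)\to\tfrac13 S^{3/2}$ genuinely forces $\|v_{t_n}\|_q^q\to 0$ along a subsequence (rather than some intermediate behaviour) is the other place where care is needed; this is where the explicit resolution of all four norms in terms of $t$ and $\|v_t\|_q^q$ via the two linear identities does the work.
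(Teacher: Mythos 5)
Your core argument coincides with the paper's on the only substantive point. For the lower bound $\|v_t\|_q^q\gtrsim1$, the paper argues exactly as you do: if $\|v_{t_n}\|_q^q\to0$ along $t_n\to t_q^*$, then $\{v_{t_n}\}$ is a minimizing sequence for the Sobolev inequality, Lions' lemma produces rescaled bubbles $w_n=(\sigma_n')^{1/2}v_{t_n}(\sigma_n'\,\cdot)$ converging to $U_{\ve_*}$ in $D^{1,2}(\bbr^3)$, and repeating the bookkeeping of Lemma~\ref{lem0001} yields the scaling relation \eqref{eq0041} with $t_n$ replaced by $t_q^*$, which is incompatible with $\sigma_n'\to0$. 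You also correctly identify the one place where Lemma~\ref{lem0001} used $t_n\to0$ and observe that boundedness of $t_n$ suffices there; the paper simply says ``repeating the arguments'' and treats the upper bound as standard and omits it.

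However, your treatment of the upper bound contains a genuine error. You claim that $m(t_n)\to\frac13S^{\frac32}$ together with the Nehari and Pohozaev identities forces $t_n\|v_{t_n}\|_q^q\to0$ and $\|v_{t_n}\|_2^2\to0$. This is false, and it proves too much: since $t_n\to t_q^*>0$, it would give $\|v_{t_n}\|_q^q\to0$, i.e.\ precisely the degeneration that the proposition (and your own first step) rules out. The correct identity on the Nehari manifold is
\begin{eqnarray*}
m(t)=\mathcal{E}_t(v_t)=\frac{q-2}{2q}\,t\|v_t\|_q^q+\frac13\|v_t\|_6^6,
\end{eqnarray*}
so the ``energy defect'' $\frac13S^{\frac32}-m(t)$ is \emph{not} a positive combination of $t\|v_t\|_q^q$ and $\|v_t\|_2^2$: the $q$-term is balanced against the drop of $\|v_t\|_6^6$ below $S^{\frac32}$, and neither piece need vanish. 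The bound you actually need follows at once from this same identity with no limiting argument: $\frac{q-2}{2q}\,t\|v_t\|_q^q\le m(t)\le\frac13S^{\frac32}$, hence $\|v_t\|_q^q\lesssim1$ for $t$ in a neighbourhood of $t_q^*>0$. With that replacement (and discarding the assertion that $\|v_{t_n}\|_2^2\to0$), your proof is complete and agrees with the paper's.
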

\begin{proof}
The conclusion $\|v_t\|_q^q\lesssim1$ as $t\to t_q^*$ is standard so we omit it.  For the conclusion $\|v_t\|_q^q\gtrsim1$ as $t\to t_q^*$ , we argue in the contrary.  Then there exists $t_n\to t_q^*$ as $n\to\infty$ such that $\|v_{t_n}\|_q^q\to0$ as $n\to \infty$.  Similar to that of \eqref{eq0065}, we also have
\begin{eqnarray*}
\|\nabla v_{t_n}\|_2^2=\|v_{t_n}\|_6^6+o_n(1)=S^{\frac{3}{2}}+o_n(1)\quad\text{as }n\to\infty.
\end{eqnarray*}
Thus, $\{v_{t_n}\}$ is a minimizing sequence of the Sobolev inequality.  By Lions' result (cf. \cite[Theorem~1.41]{W96}), up to a subsequence, there exists $\sigma'_n>0$ such that for some $\ve_*>0$,
\begin{eqnarray*}
w_n(x)=(\sigma'_n)^{\frac{1}{2}}v_{t_n}(\sigma'_n x)\to U_{\varepsilon_*}\text{ strongly in }D^{1,2}(\bbr^3)\text{ as }n\to \infty.
\end{eqnarray*}
Now, repeating the arguments for \eqref{eq0041}, we will arrive at
\begin{eqnarray*}
t_q^*\sim \left\{\aligned&(\sigma_n')^{\frac{2-q}{2}},\quad2< q<3,\\
&(\sigma_n')^{-\frac{1}{2}}\frac{1}{|\ln\sigma'_n|},\quad q=3,\\
&(\sigma'_n)^{\frac{q-4}{2}},\quad 3<q<4.
\endaligned\right.
\end{eqnarray*}
This is impossible since $\sigma'_n\to0$ as $n\to \infty$ by similar arguments as that used for $\sigma_n$ in the proof of Lemma~\ref{lem0001}.  Thus, we must have $\|v_t\|_q^q\gtrsim1$ as $t\to t_q^*$.
\end{proof}

Now, we are arriving at the following.
\begin{proposition}\label{propn0002}
Let $\lambda=1$, $N=3$ and $2<q\leq4$.  Then
\begin{enumerate}
\item[$(1)$]\quad \eqref{eq0001} has ground-states for $t\geq t_q^*$ and has no ground-states for $0<t<t_q^*$ in the case of $2<q<4$.
\item[$(2)$]\quad \eqref{eq0001} has ground-states for $t> t_4^*$ and has no ground-states for $0<t<t_4^*$ in the case of $q=4$.
\end{enumerate}
Here, $t_q^*$ is given by \eqref{eqn0090}.
\end{proposition}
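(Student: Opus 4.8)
The plan is to prove Proposition~\ref{propn0002} in three pieces. The existence of ground-states for $t>t_q^*$ has in effect already been recorded above: for such $t$ one has $m(t)<\frac13S^{\frac32}$ by the definition \eqref{eqn0090} of $t_q^*$, and then $m(t)$ is attained by the standard argument of \cite{ASM12}. It therefore remains to establish (i) that \eqref{eq0001} has no ground-state for $0<t<t_q^*$, for every $2<q\leq4$, and (ii) that \eqref{eq0001} has a ground-state at the threshold $t=t_q^*$ when $2<q<4$. The starting point for both is the elementary observation that $m\equiv\frac13S^{\frac32}$ on the whole interval $(0,t_q^*)$: indeed $m$ is nonincreasing on $(0,\infty)$ (via the fibering maps \eqref{eq0075}, cf. \cite[Lemma~3.3]{WW21}), $m(0)=\frac13S^{\frac32}$, and $m(t)=\frac13S^{\frac32}$ for small $t$ by Lemma~\ref{lem0001}, so the set $\{t>0\mid m(t)=\frac13S^{\frac32}\}$, which by the definition of $t_q^*$ has supremum $t_q^*$, must contain all of $(0,t_q^*)$.

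For (i) I argue by contradiction. Suppose $u$ is a ground-state of \eqref{eq0001} at some $t_0\in(0,t_q^*)$, so $u\in\mathcal{N}_{t_0}$ and $\mathcal{E}_{t_0}(u)=m(t_0)=\frac13S^{\frac32}$. Since $q>2$, for each $t>0$ the fibering map $s\mapsto\mathcal{E}_t(su)$ of \eqref{eq0075} has a unique maximum point $s_t>0$ with $s_tu\in\mathcal{N}_t$, and for all $s>0$
\begin{eqnarray*}
\mathcal{E}_t(su)=\mathcal{E}_{t_0}(su)-(t-t_0)\frac{s^q}{q}\|u\|_q^q .
\end{eqnarray*}
Hence, for every $t>t_0$,
\begin{eqnarray*}
m(t)\leq\mathcal{E}_t(s_tu)=\mathcal{E}_{t_0}(s_tu)-(t-t_0)\frac{s_t^q}{q}\|u\|_q^q<\sup_{s>0}\mathcal{E}_{t_0}(su)=\mathcal{E}_{t_0}(u)=\frac13S^{\frac32},
\end{eqnarray*}
and choosing $t\in(t_0,t_q^*)$ contradicts the observation in the previous paragraph. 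This proves the nonexistence parts of both $(1)$ and $(2)$.

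For (ii) the key inputs are Proposition~\ref{prop0001} and the identity $\mathcal{E}_t(u)=\frac13\|\nabla u\|_2^2$, valid for every $H^1$-solution $u$ of \eqref{eq0001} with $\lambda=1$ and $N=3$; this last is obtained by combining the Nehari identity with the Pohozaev identity (cf. \eqref{eq0002}), which along the way also gives $\|u\|_2^2=\frac{6-q}{2q}t\|u\|_q^q$. I would pick $t_n\downarrow t_q^*$ and take $v_n:=v_{t_n}$, the positive radial ground-states of \eqref{eq0001} at $t=t_n$. By Proposition~\ref{prop0001}, $\|v_n\|_q^q\sim1$; then $\|v_n\|_2^2\sim1$ by the Pohozaev identity, $\|\nabla v_n\|_2^2=3m(t_n)<S^{\frac32}$ by the energy identity and $m(t_n)<\frac13S^{\frac32}$, and $\|\nabla v_n\|_2$ is bounded away from $0$ by Gagliardo--Nirenberg since $\|v_n\|_q^q\gtrsim1$ and $\|v_n\|_2$ is bounded. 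Thus $\{v_n\}$ is bounded in $H^1(\bbr^3)$ and bounded away from $0$; passing to a subsequence, $v_n\rightharpoonup v_*$ weakly in $H^1(\bbr^3)$ with $v_*$ radial, and since $H_{rad}^1(\bbr^3)\hookrightarrow L^q(\bbr^3)$ compactly for $2<q<6$ (cf. \cite[Lemma~A.2]{BL83}), $v_n\to v_*$ strongly in $L^q(\bbr^3)$, so $\|v_*\|_q^q\sim1$ and $v_*\neq0$. Passing to the limit in the weak formulation of \eqref{eq0001} ($t_n\to t_q^*$, the subcritical term handled by the $L^q$-convergence, the critical term by weak $L^{6/5}$-convergence along an a.e.-convergent subsequence), $v_*$ is a nontrivial solution of \eqref{eq0001} at $t=t_q^*$, so $v_*\in\mathcal{N}_{t_q^*}$ and $\mathcal{E}_{t_q^*}(v_*)\geq m(t_q^*)$. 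Conversely, by the energy identity applied to $v_*$ and to $v_n$, weak lower semicontinuity of $\|\nabla\cdot\|_2^2$, and $m(t_n)\leq m(t_q^*)$ (monotonicity, $t_n>t_q^*$),
\begin{eqnarray*}
m(t_q^*)\leq\mathcal{E}_{t_q^*}(v_*)=\frac13\|\nabla v_*\|_2^2\leq\frac13\liminf_{n\to\infty}\|\nabla v_n\|_2^2=\liminf_{n\to\infty}m(t_n)\leq m(t_q^*),
\end{eqnarray*}
whence $\mathcal{E}_{t_q^*}(v_*)=m(t_q^*)$, i.e. $v_*$ is a ground-state of \eqref{eq0001} at $t=t_q^*$, positive by the strong maximum principle.

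I expect the compactness in (ii) to be the main obstacle: the crux is the a priori $H^1$-bounds on the minimizers $v_n$ as $t_n\downarrow t_q^*$, and it is precisely here that Proposition~\ref{prop0001} (the nontrivial part being the lower bound $\|v_t\|_q^q\gtrsim1$) rules out vanishing and concentration in the limit. A convenient feature of the closing chain of inequalities is that it is insensitive to whether $m(t_q^*)=\frac13S^{\frac32}$ or $m(t_q^*)<\frac13S^{\frac32}$, so no separate continuity statement for $m$ at $t_q^*$ is needed; and nothing is asserted about the borderline case $N=3$, $q=4$, $t=t_4^*$, consistently with Proposition~\ref{prop0001} being available only for $2<q<4$.
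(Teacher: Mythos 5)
Your proposal is correct and follows essentially the same route as the paper: nonexistence for $0<t<t_q^*$ via the strict decrease of $m$ once it is attained (contradicting $m\equiv\frac13S^{\frac32}$ on $(0,t_q^*)$), and existence at $t=t_q^*$ for $2<q<4$ by passing to the limit in the radial ground-states $v_t$ as $t\downarrow t_q^*$, with Proposition~\ref{prop0001} ruling out vanishing. The only (welcome) difference is that where the paper simply asserts strong $H^1$ convergence as ``standard,'' you make the compactness step explicit via the Pohozaev identities $\|v_t\|_2^2=\frac{6-q}{2q}t\|v_t\|_q^q$ and $\mathcal{E}_t(v_t)=\frac13\|\nabla v_t\|_2^2$ together with weak lower semicontinuity, which cleanly closes the argument without needing continuity of $m$ at $t_q^*$.
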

\begin{proof}
We first prove that there is no ground-states of \eqref{eq0001} for $0<t<t_q^*$ in the case of $2<q\leq 4$.  Suppose the contrary that \eqref{eq0001} has a ground-state for some $0<t<t_q^*$ in the case of $2<q\leq 4$.  Then $m(t)$ is attained.  Now, by use the fibering maps~\eqref{eq0075} in a standard way (cf. \cite[Lemma~3.3]{WW21}), we have $m(t')<m(t)$ for all $t'>t$.  It follows that
$m(t')<\frac{1}{3}S^{\frac{3}{2}}$ for all $t'>t$, which contradicts the definition of $t_q^*$ given by \eqref{eqn0090}.  Thus, there is no ground-states of \eqref{eq0001} for $0<t<t_q^*$ in the case of $2<q\leq 4$.  It remains to prove that \eqref{eq0001} has a ground-state for $t=t_q^*$ in the case of $2<q<4$, which is equivalent to prove that $m(t_q^*)$ is attained for $2<q<4$.  Let $v_t$ be a ground-state of \eqref{eq0001}, which is radial and positive for $t>t_q^*$ in the case of $2<q<4$ such that $t\to t_q^*$.  By Proposition~\ref{prop0001}, $\|v_t\|_q^q\gtrsim1$ as $t\to t_q^*$.  Since $v_t$ is radial, it is standard to show that $v_t\to v_{t_q^*}\not=0$ strongly in $H^1(\bbr^3)$ as $t\to t_q^*$ up to a subsequence.  Thus, $m(t_q^*)$ is attained by $v_{t_q^*}$, which is also a ground-state of \eqref{eq0001} for $t=t_q^*$ in the case of $2<q<4$.
\end{proof}

\begin{remark}
Upon to Theorem~\ref{thm0002}, the existence of ground-states of \eqref{eq0001} is almost completely solved, except for $N=3$, $q=4$ and $t=t_4^*$.  In this case, we believe that there is no ground-states of \eqref{eq0001}.  Indeed, let $\mu>0$, $a>0$ and $(u_\mu, \lambda_\mu)$ be a normalized solution of \eqref{eqn1001}, then by \eqref{eq0003} and \eqref{eq0010}, $\widetilde{v}_\mu$ is a solution of \eqref{eq0001} with $\lambda=1$ and $t=\widetilde{t}_\mu=\mu\widetilde{\lambda}_\mu^{\frac{q\gamma_q-q}{2}}$.
By \eqref{eq0007} and \eqref{eq0014},
\begin{eqnarray*}
\widetilde{t}_\mu\sim\mu\ve_\mu^2\sim\mu^{\frac{2(q-4)}{q-2}}\quad\text{as }\mu\to0\text{ for }4\leq q<6.
\end{eqnarray*}
Thus, $\widetilde{t}_\mu\to0$ as $\mu\to0$ for $4<q<6$ and $\widetilde{t}_\mu\sim1$ as $\mu\to0$ for $q=4$.  Note that $\widetilde{v}_\mu$, generated by $\widetilde{u}_\mu$ though \eqref{eq0003}, is a solution of \eqref{eq0001} for $t=\widetilde{t}_\mu$ and by \cite[Theorem~1.2]{WW21},
\begin{eqnarray*}
\|\nabla \widetilde{v}_\mu\|_2^2=\|\nabla \widetilde{u}_\mu\|_2^2=S^{\frac{3}{2}}+o_\mu(1)\quad\text{as }\mu\to0.
\end{eqnarray*}
It seems that
$\widetilde{v}_\mu$ will approximate the ground-state level $m(t)=\frac{1}{3}S^{\frac{3}{2}}$ for $N=3$, $\lambda=1$, $q=4$ and $t=t_4^*$ as $\mu\to0$, which suggests that the concentration phenomenon will happen at the ground-state level $m(t)=\frac{1}{3}S^{\frac{3}{2}}$ for $N=3$, $\lambda=1$, $q=4$ and $t=t_4^*$.
\end{remark}

We close this section by the proof of Theorem~\ref{thm0002}.
\medskip

\noindent\textbf{Proof of Theorem~\ref{thm0002}:}\quad It follows from Propositions~\ref{propn0001} and \ref{propn0002}.
\hfill$\Box$

\section{Normalized ground-states for $2<q<2+4/N$}
Let
\begin{eqnarray}\label{eqn0091}
t_q^{**}=\left\{\aligned&0,\quad N\geq4,\\
&t_q^*,\quad N=3,\endaligned\right.
\end{eqnarray}
where $t_q^*$ is given by \eqref{eqn0090}.
Then, by \cite[Theorem~1.2]{ASM12} and Theorem~\ref{thm0002}, \eqref{eq0001} has a ground-state $v_t$ for $t>t_q^{**}$ and $2<q<2+\frac{4}{N}$, which is positive and radial.  By \eqref{eq0005} and \eqref{eq0006}, $(u_t, \lambda_t)$ is a positive normalized solution of \eqref{eqn1001} if and only if
\begin{eqnarray*}
F(t,\mu):=t^{\frac{2}{q\gamma_q-q}-1}-\frac{1-\gamma_q}{a^2\mu^{\frac{2}{q-q\gamma_q}}}\|v_t\|_q^q=0.
\end{eqnarray*}
Clearly, for every $t>t_q^{**}$, there exists a unique
\begin{eqnarray}\label{eq0031}
\mu_t=a^{q\gamma_q-q}\bigg[(1-\gamma_q)\|v_t\|_q^qt^{\frac{q-q\gamma_q+2}{q-q\gamma_q}}\bigg]^{\frac{q-q\gamma_q}{2}}
\end{eqnarray}
such that $F(t,\mu_t)=0$.  Let
\begin{eqnarray*}\label{eq0032}
\overline{\mu}_{q,a}=\sup\{\mu_t>0\mid t>t_q^{**}\}.
\end{eqnarray*}
Then, \eqref{eqn1001} has a positive normalized solution if and only if $\mu<\overline{\mu}_{q,a}$ and $\mu=\mu_t$.
Now, we are prepared for the proof of Theorem~\ref{thm0003}.

\medskip

\noindent\textbf{Proof of Theorem~\ref{thm0003}:}\quad
By \cite[Theorem~1.1]{WW21} and \cite[Theorem~1.6]{JL201}, \eqref{eqn1001} has a normalized ground-state for $\mu>0$ sufficiently small.  Thus, we only need to prove \eqref{eqn1001} has no normalized ground-states for $\mu>0$ sufficiently large, which is equivalent to show that $\overline{\mu}_{q,a}<+\infty$.  Recall that $\gamma_q=\frac{N(q-2)}{2q}$, we always have $q>q\gamma_q$.  It follows from \eqref{eq0031} that $\mu_t\to0$ as $t\to t_q^{**}$ for $N\geq4$ since $t_q^{**}=0$ for $N\geq4$.  For $N=3$, we have $t_q^{**}=t_q^*>0$ and $\|v_t\|_q^q\sim1$ as $t\to t_q^*$ by Proposition~\ref{prop0001}.  Thus, $\mu_t\lesssim1$ as $t\to t_q^{**}$ for all $N\geq3$.  Since $v_t$ is a ground-state of \eqref{eq0001} with the least energy $m(t)$ on the Nehari manifold $\mathcal{N}_t$, by standard arguments (cf. \cite[Lemma~2.2]{CZ12}),
\begin{eqnarray}\label{eq0033}
m(t)=\frac{1}{N}S^{\frac{N}{2}}-\int_{t_q^{**}}^t\frac{1}{q}\|v_\tau\|_q^qd\tau\quad\text{for all $t>t_q^{**}$}
\end{eqnarray}
and
\begin{eqnarray}\label{eq0133}
m'(t)=-\frac{1}{q}\|v_t\|_q^q\quad\text{for a.e. }t>t_q^{**}.
\end{eqnarray}
As that of \eqref{eq0002}, by the Pohozaev identity, we have
\begin{eqnarray}\label{eq0035}
\|\nabla v_t\|_2^2=\gamma_qt\|v_t\|_q^q+\|v_t\|_{2^*}^{2^*}\quad\text{and}\quad \|\nabla v_t\|_2^2=Nm(t).
\end{eqnarray}
Thus, by \eqref{eq0033} and \eqref{eq0133},
\begin{eqnarray*}
Nm(t)+q\gamma_qm'(t)t\geq0\quad\text{ for a.e. }t>t_q^{**},
\end{eqnarray*}
which implies $m(t)t^{\frac{N}{q\gamma_q}}$ is increasing for $t>t_q^{**}$.  Now, let $t_0>t_q^{**}$ with $t_0-t_q^{**}>0$ sufficiently small such that $\mu_t\lesssim1$ for $t<t_0$, then
\begin{eqnarray}\label{eq0034}
m(t)\gtrsim t^{-\frac{N}{q\gamma_q}}\quad\text{for }t\geq t_0.
\end{eqnarray}
On the other hand, by the definition of $t_q^{**}$ given by \eqref{eqn0091}, \cite[Theorem~1.2]{ASM12} and Theorem~\ref{thm0002}, $m(t)<\frac{1}{N}S^{\frac{N}{2}}$ for $t>t_q^{**}$.  Thus,
it is standard to apply the classical elliptic estimates to show that $\|v_t\|_\infty\lesssim1$ for all $t\geq t_0$.  By \eqref{eq0133} and \eqref{eq0035},
\begin{eqnarray*}
Nm(t)=\|\nabla v_t\|_2^2\leqslant(1+O(\frac{1}{t}))\gamma_q\|v_t\|_q^qt=-(1+O(\frac{1}{t}))q\gamma_qm'(t)t\quad\text{ for a.e. }t\geq t_0,
\end{eqnarray*}
which implies that for every $\ve>0$ there exists $t_\ve>0$ such that $m(t)\lesssim t^{-\frac{N}{q\gamma_q+\ve}}$ for $t\geq t_\ve$.  It follows from \eqref{eq0035} once more that
\begin{eqnarray*}
\|v_t\|_q^q\lesssim t^{-\frac{N}{q\gamma_q+\ve}-1}\quad\text{for }t\geq t_\ve.
\end{eqnarray*}
Thus, by \eqref{eq0035} and $\|v_t\|_\infty\lesssim1$ for all $t\geq t_0$, we have
\begin{eqnarray*}
Nm(t)=\|\nabla v_t\|_2^2\leqslant \gamma_q\|v_t\|_q^qt+C_0t^{-\frac{N}{q\gamma_q+\ve}-1} \quad\text{ for }t\geq t_\ve,
\end{eqnarray*}
which implies $m(t)t^{\frac{N}{q\gamma_q}}-C_1t^{-\frac{N}{q\gamma_q+\ve}}$ is decreasing for $t\geq t_\ve$.  Therefore, $m(t)\lesssim t^{-\frac{N}{q\gamma_q}}$ for $t>0$ sufficiently large, which, together with \eqref{eq0034}, implies that
\begin{eqnarray*}
m(t)\sim t^{-\frac{N}{q\gamma_q}}\quad\text{as }t\to+\infty.
\end{eqnarray*}
It follows from \eqref{eq0035} and $\|v_t\|_\infty\lesssim1$ for all $t\geq t_0$ that
\begin{eqnarray*}
\|v_t\|_q^q\sim t^{-\frac{N}{q\gamma_q}-1}\quad\text{as }t\to+\infty.
\end{eqnarray*}
Since
\begin{eqnarray*}
\frac{2}{q-q\gamma_q}-\frac{N}{q\gamma_q}=\frac{2N(q-2-\frac{4}{N})}{(q-2)(2N-q(N-2))}<0 \quad\text{for } 2<q<2+\frac{4}{N},
\end{eqnarray*}
by \eqref{eq0031}, $\overline{\mu}_{q,a}<+\infty$ for $2<q<2+\frac{4}{N}$.
\hfill$\Box$

\section{An application}
In this section, we shall apply our above strategy to study the Schr\"odinger equation~\eqref{eq0066}.
Since there is an additional condition $\|u\|_2^2=r^2$ in \eqref{eq0066}, $\lambda$ in \eqref{eq0066} is not fixed but appears as a Lagrange multiplier.

Let $(u_r, \lambda_r)$ be a solution of \eqref{eq0066}.  Since $V(x)=x_1^2+x_2^2$, we have $\nabla V(x)\cdot x=2V(x)$.  Thus, the Pohozaev identity of \eqref{eq0066} (cf. \cite{BBJV17}) is given by
\begin{eqnarray*}
\frac{1}{6}\|\nabla u_r\|_2^2+\frac{\lambda_rr^2}{2}+\frac{5}{6}\int_{\bbr^3}V(x)u_r^2dx=\frac{1}{p}\|u_r\|_p^p,
\end{eqnarray*}
which, combining the equation~\eqref{eq0066}, implies that
\begin{eqnarray}\label{eq0069}
\lambda_rr^2=\frac{6-p}{2p}\|u_r\|_p^p-2\int_{\bbr^3}V(x)u_r^2dx.
\end{eqnarray}
We define
\begin{eqnarray}\label{eq0068}
w_r(x)=\lambda_r^{-\frac{1}{p-2}}u_r(\lambda_r^{-\frac12}x)\quad\text{and}\quad t_r=\lambda_r
\end{eqnarray}
Then by $V(x)=x_1^2+x_2^2$ and \eqref{eq0069}, $(w_r, t_r)$ is a solution of the following equation:
\begin{eqnarray}\label{eq0067}
\left\{\aligned
&-\Delta w+ w+t^{-2}V(x)w=|w|^{p-2}w\quad\text{in }\mathbb{R}^3,\\
&u\in H^1(\bbr^3),\quad r^2=t^{\frac{10-3p}{2(p-2)}}(\frac{6-p}{2p}\|w\|_p^p-2t^{-2}\int_{\bbr^3}V(x)w^2dx).
\endaligned\right.
\end{eqnarray}
Clearly, if $(w_r, t_r)$ is a solution of \eqref{eq0067}, then, by \eqref{eq0068}, $(u_r, \lambda_r)$ is also a solution of \eqref{eq0066}.

With these basic observations in hands, to find normalized solutions of \eqref{eq0066} with positive Lagrange multipliers, it is equivalent to study the existence of solutions of \eqref{eq0067}.  For this purpose, let us first consider the following equation:
\begin{eqnarray}\label{eq0070}
\left\{\aligned
&-\Delta w+w+t^{-2}V(x)w=|w|^{p-2}w\quad\text{in }\mathbb{R}^3,\\
&w\in H^1(\bbr^3).
\endaligned\right.
\end{eqnarray}
The corresponding functional of \eqref{eq0070} is given by
\begin{eqnarray*}
\mathcal{J}_t(w)=\frac{1}{2}(\|\nabla w\|_2^2+\|w\|_2^2+\int_{\bbr^3}t^{-2}V(x)w^2dx)-\frac{1}{p}\|w\|_p^p.
\end{eqnarray*}
By \cite[Lemma~2.1]{BBJV17} and the Sobolev embedding theorem, this functional is well defined and of class $C^2$ in the Hilbert space
\begin{eqnarray}\label{eq0078}
X=\{w\in H^1(\bbr^3)\mid \int_{\bbr^3}V(x)w^2dx<+\infty\}
\end{eqnarray}
with the norm
\begin{eqnarray*}
\|w\|_X=(\|\nabla w\|_2^2+\int_{\bbr^3}V(x)w^2dx)^{\frac{1}{2}}.
\end{eqnarray*}
We also define the usual Nehari manifold of $\mathcal{J}_t(w)$ as follows:
\begin{eqnarray*}
\mathcal{M}_t=\{w\in X\backslash\{0\}\mid \mathcal{J}_t'(w)w=0\}.
\end{eqnarray*}
The associated fibering map for every $w\not=0$ in $X$ is given by
\begin{eqnarray}\label{eq0076}
J(s)=\frac{s^2}{2}(\|\nabla w\|_2^2+\|w\|_2^2+\int_{\bbr^3}t^{-2}V(x)w^2dx)-\frac{s^p}{p}\|w\|_p^p.
\end{eqnarray}
Since $p>2$, it is standard to show that for every $w\not=0$ in $X$, there exists a unique $s_0'>0$ such that $J(s)$ is strictly increasing for $0<s<s_0'$ and is strictly decreasing for $s>s_0'$.
Let
\begin{eqnarray*}
\mathfrak{m}(t)=\inf_{v\in\mathcal{M}_t}\mathcal{J}_t(v).
\end{eqnarray*}
\begin{definition}
We say that $w$ is a ground-state of \eqref{eq0070} if $w$ is a nontrivial solution of \eqref{eq0070} with $\mathcal{J}_t(w)=\mathfrak{m}(t)$.
\end{definition}
We also need the following equation:
\begin{eqnarray}\label{eq0077}
\left\{\aligned
&-\Delta u+tu+V(x)u=|u|^{p-2}u\quad\text{in }\mathbb{R}^3,\\
&u\in H^1(\bbr^3).
\endaligned\right.
\end{eqnarray}
The corresponding functional of \eqref{eq0077} is given by
\begin{eqnarray*}
\mathcal{I}_t(u)=\frac{1}{2}(\|\nabla u\|_2^2+t\|u\|_2^2+\int_{\bbr^3}V(x)u^2dx)-\frac{1}{p}\|u\|_p^p.
\end{eqnarray*}
This functional is well defined and of class $C^2$ in the Hilbert space $X$, which is given by \eqref{eq0078}.
We define the usual Nehari manifold of $\mathcal{I}_t(u)$ by
\begin{eqnarray*}
\mathcal{P}_t=\{u\in X\backslash\{0\}\mid \mathcal{I}_t'(u)u=0\}.
\end{eqnarray*}
The associated fibering map for every $u\not=0$ in $X$ is given by
\begin{eqnarray}\label{eq0079}
I(s)=\frac{s^2}{2}(\|\nabla u\|_2^2+t\|u\|_2^2+\int_{\bbr^3}V(x)u^2dx)-\frac{s^p}{p}\|u\|_p^p.
\end{eqnarray}
Since $p>2$, it is standard to show that for every $u\not=0$ in $X$, there exists a unique $s_*>0$ such that $I(s)$ is strictly increasing for $0<s<s_*$ and is strictly decreasing for $s>s_*$.
Let
\begin{eqnarray*}
\mathbb{M}(t)=\inf_{v\in\mathcal{P}_t}\mathcal{I}_t(v).
\end{eqnarray*}
\begin{definition}
We say that $u$ is a ground-state of \eqref{eq0077} if $u$ is a nontrivial solution of \eqref{eq0077} with $\mathcal{I}_t(u)=\mathbb{M}(t)$.
\end{definition}

Now, we have the following result of \eqref{eq0070}.
\begin{proposition}\label{prop0002}
Let $\frac{10}{3}<p<6$, then \eqref{eq0070} has a positive ground-state $w_t$ for all $t>0$ satisfying $\|w_t\|_2^2\sim t^{\frac{3p-10}{2(p-2)}}$ as $t\to0$ and $w_t\to w_\infty$ strongly in $H^1(\bbr^3)$ as $t\to+\infty$, where $w_\infty$ is the unique (up to translations) positive solution of the following equation:
\begin{eqnarray}\label{eq0072}
\left\{\aligned
&-\Delta w+w=|w|^{p-2}w\quad\text{in }\mathbb{R}^3,\\
&w\in H^1(\bbr^3).
\endaligned\right.
\end{eqnarray}
Moreover, $w_t$ is unique for $t>0$ sufficiently large.
\end{proposition}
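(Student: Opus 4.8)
The plan is to split the statement into three regimes in the parameter $t$ and handle them by different compactness mechanisms. For \emph{fixed} $t>0$, existence of a positive ground-state $w_t$ of \eqref{eq0070} is standard: the potential $t^{-2}V(x)=t^{-2}(x_1^2+x_2^2)$ is nonnegative and the embedding $X\hookrightarrow L^p(\bbr^3)$ is compact for $\frac{10}{3}<p<6$ by \cite[Lemma~2.1]{BBJV17} (the confinement in the $(x_1,x_2)$-variables kills the translation noncompactness), so any minimizing sequence on the Nehari manifold $\mathcal M_t$, which is bounded in $X$ by the fibering-map analysis \eqref{eq0076}, converges strongly up to a subsequence; the limit is a ground-state, and positivity follows by replacing $w_t$ with $|w_t|$ and applying the strong maximum principle. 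This gives the ground-state $w_t$ for all $t>0$.

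Next I would establish the small-$t$ asymptotics $\|w_t\|_2^2\sim t^{\frac{3p-10}{2(p-2)}}$. Here I would use the scaling that relates \eqref{eq0070} to \eqref{eq0077}: setting $u(x)=w(t^{-1/2}x)$ (or the inverse) shows \eqref{eq0070} at parameter $t$ is, up to scaling, \eqref{eq0077} at the same parameter, and under this scaling $L^2$-norms pick up a factor $t^{\pm 3/2}$. For \eqref{eq0077} with $t\to 0$ the mass term $t\|u\|_2^2$ becomes negligible and the problem degenerates to $-\Delta u+V(x)u=|u|^{p-2}u$, whose ground-state energy $\mathbb M(0)$ is a fixed positive number; a careful expansion of $\mathbb M(t)$ near $t=0$ (using \eqref{eq0079}, the energy-derivative identity $\mathbb M'(t)=\frac12\|u_t\|_2^2$ of the type used in \eqref{eq0133}, and the Pohozaev-type identity \eqref{eq0069}) pins down the rate at which $\|u_t\|_2^2$ blows up, and translating back through the scaling yields the claimed power $t^{\frac{3p-10}{2(p-2)}}$ for $\|w_t\|_2^2$ (note $3p-10>0$ since $p>10/3$, so this $\to 0$ as $t\to 0$, consistent with $w_t$ concentrating).

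For large $t$, the term $t^{-2}V(x)w^2$ is a vanishing perturbation of \eqref{eq0072}. I would show $\mathfrak m(t)\to\mathfrak m_\infty$, the ground-state energy of \eqref{eq0072}, from above using $w_\infty$ (suitably cut off so it lies in $X$) as a test function, and from below by the fact that dropping the nonnegative potential only decreases the energy. Then, given a sequence $t_n\to+\infty$ and the ground-states $w_{t_n}$, boundedness in $H^1(\bbr^3)$ is immediate from the energy bound; the only obstruction to strong $H^1$-convergence is a translation running to infinity in the $x_3$-direction (translations in $x_1,x_2$ are controlled by the potential, but $V$ does not see $x_3$). A profile-decomposition / concentration-compactness argument shows the limit profile solves \eqref{eq0072}, and energy comparison forces it to be a ground-state, hence equals $w_\infty$ up to an $x_3$-translation; the energy also passes to the limit, giving strong convergence $w_{t_n}\to w_\infty$ in $H^1(\bbr^3)$ (after a harmless translation), and by the uniqueness of $w_\infty$ in \cite{K89} this is the full curve $t\to+\infty$ limit. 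Finally, uniqueness of $w_t$ for large $t$ would follow from a nondegeneracy/implicit-function-theorem argument: $w_\infty$ is nondegenerate (Kwong), so the map $(t,w)\mapsto -\Delta w+w+t^{-2}V w-|w|^{p-2}w$ has invertible linearization at $(0,w_\infty)$ in the $x_3$-translation-transverse subspace, and a Lyapunov–Schmidt reduction in the parameter $s=t^{-2}$ gives a locally unique branch; one then rules out solutions off this branch by showing any ground-state for large $t$ is close to $w_\infty$ in $X$ (from the convergence just proved) and hence lies in the IFT neighborhood.

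The main obstacle is the large-$t$ regime: the confining potential only controls two of the three spatial directions, so the compactness needed to identify the limit as $w_\infty$ and to upgrade to strong $H^1$-convergence must handle a genuine loss of compactness along $x_3$-translations — this requires a concentration-compactness / Brezis–Lieb argument rather than the compact-embedding shortcut available for fixed $t$, and the uniqueness claim then rests on transplanting Kwong's nondegeneracy of $w_\infty$ to the perturbed problem via a reduction that is equivariant under $x_3$-translations.
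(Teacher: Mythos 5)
Your overall architecture (fixed-$t$ existence, scaling to \eqref{eq0077} for the small-$t$ mass asymptotics, vanishing-perturbation analysis for $t\to+\infty$, and nondegeneracy of $w_\infty$ for uniqueness) matches the paper's proof. The large-$t$ and small-$t$ parts are essentially the arguments used there: the paper gets $\|w_t\|_2^2\sim t^{\frac{3p-10}{2(p-2)}}$ by showing $\mathbb{M}(t)=\mathbb{M}(0)+o_t(1)$ and then invoking Lions' lemma to conclude $\|u_t\|_2^2\sim1$, rather than an expansion of $\mathbb{M}'(t)$; and it proves uniqueness by a direct blow-up/contradiction argument on $\phi_t=(w_t-w_t')/\|w_t-w_t'\|_{L^\infty(\bbr^3)}$ combined with uniform exponential decay, rather than a Lyapunov--Schmidt reduction, but both routes rest on the same nondegeneracy input and on the evenness of the ground-states in $x_3$ to kill the translation kernel.

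There is, however, a genuine error in your existence step: the embedding $X\hookrightarrow L^p(\bbr^3)$ is \emph{not} compact, for any $t$. Since $V(x)=x_1^2+x_2^2$ is independent of $x_3$, the sequence $u_n(x)=u(x_1,x_2,x_3-n)$ with $u\in C_c^\infty(\bbr^3)$ is bounded in $X$ (both $\|\nabla u_n\|_2$ and $\int_{\bbr^3}Vu_n^2\,dx$ are translation-invariant in $x_3$), converges weakly to $0$, yet $\|u_n\|_p=\|u\|_p$ does not vanish; \cite[Lemma~2.1]{BBJV17} is only a Gagliardo--Nirenberg-type interpolation inequality, not a compactness statement. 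So the ``compact-embedding shortcut available for fixed $t$'' that you invoke does not exist: the loss of compactness along $x_3$-translations, which you correctly identify as the main obstacle in the large-$t$ regime, is already present for each fixed $t$ and must be handled at the existence stage. The paper does this by taking a minimizing sequence that is Schwarz-symmetric in $(x_1,x_2)$ and even and decreasing in $|x_3|$ (following the symmetrization discussion in \cite{BBJV17}), applying the Lions-type nonvanishing lemma \cite[Lemma~3.4]{BBJV17} to extract, after an $x_3$-translation, a nontrivial weak limit $w_0$, and then using the fibering-map/Nehari structure to show $w_0$ attains $\mathfrak{m}(t)$. Your proof needs this (or an equivalent concentration-compactness argument) inserted into the fixed-$t$ existence step; with that repair the rest of your plan goes through.
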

\begin{proof}
The proof is standard so we only sketch it here.  We first prove the existence of ground-states of \eqref{eq0070}.  By the discussion in \cite[4.2 Symmetry of minimizers]{BBJV17}, we know that for the energy level $\mathfrak{m}(t)$, there exists a minimizing sequence $\{w_n\}$ on the Nehari manifold $\mathcal{M}_t$ such that $w_n$ is real and positive.  Moreover, $w_n$ is radial and decreasing w.r.t. $(x_1, x_2)$ for all $x_3$ and $w_n$ is even and decreasing w.r.t. $x_3$ for all $(x_1, x_2)$.  Since $\frac{10}{3}<p<6$, it is standard to use the fibering maps~\eqref{eq0076} to show that $\mathfrak{m}(t)>0$ on $\mathcal{M}_t$.  Thus, by \cite[Lemma~3.4]{BBJV17}, there exists $\{z_n\}\in\bbr$ such that
\begin{eqnarray*}
w_n(x_1,x_2,x_3-z_n)\rightharpoonup w_0\not=0\quad\text{weakly in }X\text{ as }n\to\infty.
\end{eqnarray*}
Since $\frac{10}{3}<p<6$, the fibering map of every $w\not=0$ in $X$, see \eqref{eq0076}, has a unique maximum point $s_0'$ and it interacts the Nehari manifold $\mathcal{M}_t$ only at the unique maximum point $s_0'$.  Thus, we can use standard arguments (cf. \cite[Proposition~3.1]{WW21}) to show that $w_0$ is a positive ground-state of \eqref{eq0070}.
We next prove the convergent conclusion for $t\to+\infty$.
Let $w_t$ be a positive ground-state of \eqref{eq0070} for $t>0$.
Since $V(x)\geq0$, $t>0$ and $w_t$ is positive, we know that $w_t$ satisfies
\begin{eqnarray}\label{eq0071}
-\Delta w_t+w_t\leq w_t^{p-1}\quad\text{in }\bbr^3.
\end{eqnarray}
By using the fibering maps~\eqref{eq0076} in a standard way (cf. \cite[Lemma~3.2]{WW21}), we know that $\mathfrak{m}(t)$ is decreasing w.r.t. $t>0$.  Thus, $\{w_t\}$ is bounded in $H^1(\bbr^3)$.  It follows from \eqref{eq0071} and the classical elliptic estimates that
\begin{eqnarray}\label{eqnew0001}
w_t\lesssim (1+|x|)^{-1}e^{-\frac{1}{2}|x|}\quad\text{in }\bbr^3\text{ for }t\geq1.
\end{eqnarray}
Thus, by $V(x)=x_1^2+x_2^2$,
\begin{eqnarray*}
\int_{\bbr^3}V(x)w_t^2dx\lesssim1\quad\text{for all }t\geq1,
\end{eqnarray*}
which implies that
\begin{eqnarray}\label{eq0097}
t^{-2}\int_{\bbr^3}V(x)w_t^2dx=o_t(1)\quad\text{as }t\to+\infty.
\end{eqnarray}
Now, using the fibering maps~\eqref{eq0076} in a standard way, we know that $\mathfrak{m}(t)\geq \mathfrak{m}+o_t(1)$ as $t\to+\infty$, where
\begin{eqnarray*}
\mathfrak{m}=\inf_{v\in\mathcal{M}}\mathcal{J}(v)
\end{eqnarray*}
with
\begin{eqnarray*}
\mathcal{J}(w)=\frac{1}{2}(\|\nabla w\|_2^2+\|w\|_2^2)-\frac{1}{p}\|w\|_p^p
\end{eqnarray*}
and
\begin{eqnarray*}
\mathcal{M}=\{w\in H^1(\bbr^3)\backslash\{0\}\mid \mathcal{J}'(w)w=0\}.
\end{eqnarray*}
On the other hand, it is well known that \eqref{eq0072} has a unique (up to translations) positive radial solution $w_\infty$, which exponentially decays to zero at infinity.  Thus, using $w_\infty$ as a test function and adapting the property of the fibering maps~\eqref{eq0076} in a standard way, we also have $\mathfrak{m}(t)\leq \mathfrak{m}+o_t(1)$ as $t\to+\infty$.  It follows that $\mathfrak{m}(t)=\mathfrak{m}+o_t(1)$ as $t\to+\infty$, which implies that $\|w_t\|_p^p=\|w_\infty\|_p^p+o_t(1)$.  Now, by standard arguments and the uniqueness of $w_\infty$, we can show that $w_t\to w_\infty$ strongly in $H^1(\bbr^3)$ as $t\to+\infty$.  We now turn to the proof of the convergent conclusion for $t\to0$.  For every $t>0$, let $w_t$ be a positive ground-state of \eqref{eq0070}, then by \eqref{eq0068}, $u_t$ is a positive solution of \eqref{eq0077}.  Moreover, by direct calculations,
\begin{eqnarray*}
\mathcal{J}_t(w_t)=t^{\frac{p-6}{2(p-2)}}\mathcal{I}_t(u_t)\quad\text{and}\quad\mathcal{J}_t'(w_t)w_t=t^{\frac{p-6}{2(p-2)}}\mathcal{I}_t'(u_t)u_t.
\end{eqnarray*}
Thus, $u_t$ is a positive ground-state of \eqref{eq0077} for all $t>0$.  On the other hand, by \cite[Lemma~2.1]{BBJV17}, H\"older and Sobolev inequalities,
\begin{eqnarray*}
\|u\|_p^p\lesssim\|u\|_2^{\frac{6-p}{2}}\|\nabla u\|_2^{\frac{3p-6}{2}}\lesssim\|u\|_X^p\quad\text{for all }u\in X.
\end{eqnarray*}
Thus, by using the fibering maps~\eqref{eq0079} in a standard way, we know that $\mathbb{M}(0)>0$.  By similar arguments as that used above to compare the energy levels $\mathbb{M}(0)$ and $\mathbb{M}(t)$, we can obtain that $\mathbb{M}(t)=\mathbb{M}(0)+o_t(1)$ as $t\to0$.  It follows that $\{u_t\}$ is bounded in $X$ and $\|u_t\|_p^p\sim1$ as $t\to0$.  By \cite[Lemma~2.1]{BBJV17}, $\{u_t\}$ is also bounded in $H^1(\bbr^3)$ as $t\to0$.  Now, by the Lions' lemma (cf. \cite[Lemma I.1]{L84} or \cite[Lemma 1.21]{W96}), we can conclude that $\|u_t\|_2^2\sim1$ as $t\to0$.  It follows from \eqref{eq0068} that $\|w_t\|_2^2\sim t^{\frac{3p-10}{2(p-2)}}$ as $t\to0$.  We close this proof by showing the uniqueness of $w_t$ for $t>0$ sufficiently large.  Let $w_t$ and $w_t'$ be two different positive ground-states of \eqref{eq0070} and we define $\phi_t=\frac{w_t-w_t'}{\|w_t-w_t'\|_{L^\infty}(\bbr^3)}$.  Then by the Taylor expansion,
\begin{eqnarray*}
-\Delta\phi_t+\phi_t+t^{-2}V(x)\phi_t=(p-1)(w_t+\theta(w_t-w_t')))^{p-2}\phi_t,\quad\text{in }\bbr^3,
\end{eqnarray*}
where $\theta\in(0, 1)$.
Since $V(x)\geq0$, by \eqref{eqnew0001},
\begin{eqnarray*}
-\Delta(\phi_t)^2+\frac{3}{2}(\phi_t)^2\leq0,\quad\text{in }\bbr^3.
\end{eqnarray*}
Thus, by the maximum principle, $|\phi_t|\lesssim e^{-\frac{1}{2}|x|}$ for $|x|\geq1$.
It is standard to show that $\phi_t\to\phi$ strongly in any compact sets as $t\to+\infty$ and
\begin{eqnarray*}
-\Delta\phi+\phi=(p-1)w_\infty^{p-2}\phi,\quad\text{in }\bbr^3.
\end{eqnarray*}
Note that $w_t$ and $w_t'$ are radial w.r.t. $(x_1, x_2)$ for all $x_3$ and even w.r.t. $x_3$ for all $(x_1, x_2)$.  Thus, $\phi_t$ is also radial w.r.t. $(x_1, x_2)$ for all $x_3$ and even w.r.t. $x_3$ for all $(x_1, x_2)$.
Now, by the well-known nondegeneracy of $w_\infty$, we have $\phi_\infty\equiv0$.  It, together with $|\phi_t|\lesssim e^{-\frac{1}{2}|x|}$ for $|x|\geq1$, contradicts $\|\phi_t\|_{L^{\infty}(\bbr^3)}=1$.  Therefore, $w_t$ is unique for $t>0$ sufficiently large.
\end{proof}

Let $w_t$ be a positive ground-state of \eqref{eq0070} given by Proposition~\ref{prop0002} and we define
\begin{eqnarray*}
f(r,t):=r^2-t^{\frac{10-3p}{2(p-2)}}(\frac{6-p}{2p}\|w_t\|_p^p-2t^{-2}\int_{\bbr^3}V(x)w_t^2dx).
\end{eqnarray*}
By Proposition~\ref{prop0002}, for every $t>0$ sufficiently large, there exists a unique
\begin{eqnarray}\label{eq0081}
r_t=(t^{\frac{10-3p}{2(p-2)}}(\frac{6-p}{2p}\|w_t\|_p^p-2t^{-2}\int_{\bbr^3}V(x)w_t^2dx))^{\frac{1}{2}}>0
\end{eqnarray}
such that $f(r_t,t)=0$.  Thus,
by \eqref{eq0068}, $(u_{r_t}, t)$ is a positive normalized solution of \eqref{eq0066} with a positive Lagrange multiplier $t>0$.
We are now prepared for the proof of Theorem~\ref{thm0004}.

\medskip

\noindent\textbf{Proof of Theorem~\ref{thm0004}:}\quad
By the uniqueness of $w_t$ given by Proposition~\ref{prop0002} for $t>0$ sufficiently large, say $t>T_*$.  It is standard to show that $\int_{\bbr^3}V(x)w_t^2$ is continuous for $t>T_*$.  Note that
by Proposition~\ref{prop0002},
\begin{eqnarray*}
(\frac{6-p}{2p}\|w_t\|_p^p-2t^{-2}\int_{\bbr^3}V(x)w_t^2dx)=\frac{6-p}{2p}\|w_\infty\|_p^p+o_t(1).
\end{eqnarray*}
Thus, by $\frac{10}{3}<p<6$, for every $r<(T_*^{\frac{10-3p}{2(p-2)}}(\frac{6-p}{2p}\|w_{T_*}\|_p^p-2T_*^{-2}\int_{\bbr^3}V(x)w_{T_*}^2dx))^{\frac{1}{2}}$, $f(r,t)=0$ has a solution $t_r>T_*$.
This, together with \cite[Theorem~2]{BBJV17}, implies that \eqref{eq0066} has a second positive normalized solution $u_{r,2}$ with a positive Lagrange multiplier $\lambda_{r,2}$.  The asymptotic behavior of $u_{r,2}$ and $\lambda_{r,2}$ is obtained by \eqref{eq0068} and \eqref{eq0081}.  It remains to show that $u_{r,2}$ is a mountain-pass solution of \eqref{eq0066} for $r>0$ sufficiently small.  As that in \cite[Remark~1.10]{BBJV17}, we introduce the mountain-pass level
\begin{eqnarray*}
\alpha(r)=\inf_{g\in\Gamma_r}\max_{t\in[0, 1]}\mathcal{Y}(g[t]),
\end{eqnarray*}
where $\mathcal{Y}(u)=\frac{1}{2}\|u\|_{X}^2-\frac{1}{p}\|u\|_p^p$
and
\begin{eqnarray*}
\Gamma_r=\{g[s]\in C([0, 1], \mathcal{S}_r)\mid g[0]=u_{r,1}\quad\text{and}\quad \mathcal{Y}(g[1])<\mathcal{Y}(g[0])\}
\end{eqnarray*}
with $u_{r,1}$ being a local minimizer of $\mathcal{Y}(u)$ in $\mathcal{S}_r$ found in \cite{BBJV17} and $\mathcal{S}_r=\{u\in X\mid \|u\|_2^2=r^2\}$.  Let
\begin{eqnarray*}
B_{\rho,X,t}=\{u\in X\mid \|u\|_{X,t}^2\leq \rho^2\},
\end{eqnarray*}
where $\|u\|_{X,t}$ is a norm in $X$ given by
\begin{eqnarray*}
\|w\|_{X,t}=(\|\nabla w\|_2^2+\|w\|_2^2+t^{-2}\int_{\bbr^3}V(x)w^2dx)^{\frac{1}{2}}.
\end{eqnarray*}
Then by \cite[Lemma~2.1]{BBJV17} and the Sobolev inequality, for a fixed $\rho>0$ sufficiently small, it can be proved by using $\frac{10}{3}<p<6$ in a standard way that
\begin{eqnarray*}
\mathfrak{m}(t)=\inf_{h\in\Theta}\max_{t\in[0, 1]}\mathcal{J}_t(h[s]),
\end{eqnarray*}
where
\begin{eqnarray*}
\Theta=\{h[t]\in C([0, 1], X)\mid h[0]\in B_{\rho,X,t}\quad\text{and}\quad \mathcal{J}_t(h[1])<\frac{1}{4}\rho^2\}.
\end{eqnarray*}
Now, for every $g[s]\in\Gamma_r$, we define $g^*[s]=\lambda_{r,2}^{-\frac{1}{p-2}}g[s](\lambda_{r,2}^{-\frac{1}{2}}x)$.  Then
\begin{eqnarray*}
\mathcal{J}_{\lambda_{r,2}}(g^*[s])=\lambda_{r,2}^{\frac{p-6}{2(p-2)}}(\mathcal{Y}(g[s])+\frac{\lambda_{r,2}r^2}{2}).
\end{eqnarray*}
By \cite[Theorem~1]{BBJV17} and \eqref{eq0099},
\begin{eqnarray*}
\|g^*[0]\|_{X,\lambda_{r,2}}^2\lesssim r^2\lambda_{r,2}^{\frac{p-6}{2(p-2)}}\sim\lambda_{r,2}^{-1}\to0\quad\text{as }r\to0.
\end{eqnarray*}
Thus, $g^*[0]\in B_{\rho,X,\lambda_{r,2}}$ for $r>0$ sufficiently small and $\mathcal{J}_t(g^*[0])\to0$ as $r\to0$.  By the definition of $g[t]$, we also have $\mathcal{J}_t(g^*[1])<\frac{1}{4}\rho^2$.  It follows that $g^*[t]\in\Theta$, which implies
\begin{eqnarray*}
\mathfrak{m}(\lambda_{r,2})\leq\lambda_{r,2}^{\frac{p-6}{2(p-2)}}(\alpha(r)+\frac{\lambda_{r,2}r^2}{2}).
\end{eqnarray*}
On the other hand, the fibering map of $\mathcal{Y}(u)$ at $u_{r,2}$ is given by
\begin{eqnarray*}
\mathcal{T}_{u_{r,2}}(\tau)=\frac{\tau^2}{2}\|\nabla u_{r,2}\|_2^2+\frac{1}{2\tau^2}\int_{\bbr^3}V(x)u_{r,2}^2dx-\frac{\tau^{p\gamma_p}}{p}\|u_{r,2}\|_p^p.
\end{eqnarray*}
By direct calculations,
\begin{eqnarray*}
\mathcal{T}_{u_{r,2}}'(\tau)=\tau\|\nabla u_{r,2}\|_2^2-\frac{1}{\tau^3}\int_{\bbr^3}V(x)u_{r,2}^2dx-\gamma_p\tau^{p\gamma_p-1}\|u_{r,2}\|_p^p
\end{eqnarray*}
and
\begin{eqnarray*}
\mathcal{T}_{u_{r,2}}''(\tau)=\|\nabla u_{r,2}\|_2^2+\frac{3}{\tau^4}\int_{\bbr^3}V(x)u_{r,2}^2dx-\gamma_p(p\gamma_p-1)\tau^{p\gamma_p-2}\|u_{r,2}\|_p^p.
\end{eqnarray*}
Clearly, $\mathcal{T}_{u_{r,2}}'(1)=0$.  Moreover, by \eqref{eq0098}, \eqref{eq0097} and the Pohozaev identity of $w_\infty$,
\begin{eqnarray*}
\mathcal{T}_{u_{r,2}}''(1)=\lambda_{r,2}^{\frac{6-p}{2(p-2)}}(\gamma_p\|w_\infty\|_p^p(2-p\gamma_p)+o_r(1))<0
\end{eqnarray*}
for $r>0$ sufficiently small.  Now, let $h(\tau)=\tau^4\|\nabla u_{r,2}\|_2^2-\gamma_p\tau^{p\gamma_p+2}\|u_{r,2}\|_p^p,$
then,
\begin{eqnarray*}
\max_{\tau\geq0}h(\tau)=\bigg[\frac{4\|\nabla u_{r,2}\|_2^2}{\gamma_p(p\gamma_p+2)\|u_{r,2}\|_p^p}\bigg]^{\frac{4}{p\gamma_p-2}}\frac{p\gamma_p-2}{p\gamma_p+2}\|\nabla u_{r,2}\|_2^2>\int_{\bbr^3}V(x)u_{r,2}^2dx.
\end{eqnarray*}
It follows that there exists $\tau_r<1$ such that $\mathcal{T}_{u_{r,2}}'(\tau_r)=0$ and $\mathcal{T}_{u_{r,2}}''(\tau_r)>0$.  We claim that $\tau_r\to0$ as $r\to0$.  If not, then, there exists $r_n\to0$ such that $\tau_{r_n}\gtrsim1$ as $n\to\infty$.  Without loss of generality, we may assume $\tau_r\gtrsim1$ for all $r>0$ sufficiently small.  By $\mathcal{T}_{u_{r,2}}'(\tau_{r})=0$, \eqref{eq0098}, \eqref{eq0097} and the Pohozaev identity of $w_\infty$,
\begin{eqnarray*}
\mathcal{T}_{u_{r,2}}''(\tau)&=&\lambda_{r,2}^{\frac{6-p}{2(p-2)}}(4\|\nabla w_\infty\|_2^2-\gamma_p(p\gamma_p+2)\|w_\infty\|_p^p\tau_r^{p\gamma_p-2}+o_r(1))\\
&=&\lambda_{r,2}^{\frac{6-p}{2(p-2)}}\gamma_p(4-(p\gamma_p+2)\tau_r^{p\gamma_p-2}+o_r(1))\|w_\infty\|_p^p,
\end{eqnarray*}
which implies $\tau_r<(\frac{4}{p\gamma_p+2})^{\frac{1}{p\gamma_p-2}}<1$.  Without loss of generality, we may assume that $\tau_r\to\tau_0$ as $r\to0$.  Then by $\mathcal{T}_{u_{r,2}}'(\tau_r)=0$, \eqref{eq0097} and the fact that $w_\infty$ solves \eqref{eq0096}, we must have $\tau_0=0$.  It is impossible.  Thus, we must have $\tau_r\to0$ as $r\to0$.  By \eqref{eq0099} and \eqref{eq0098},
\begin{eqnarray*}
\frac{1}{\tau_r^4\lambda_{r,2}^2}(\int_{\bbr^3}V(x)w_\infty^2dx+o_r(1))=\|\nabla w_\infty\|_2^2+o_r(1).
\end{eqnarray*}
It follows from \eqref{eq0099} and \eqref{eq0098} that
\begin{eqnarray*}
\|(u_{r,2})_{\tau_r}\|_X^2&=&\tau_r^2\|\nabla u_{r,2}\|_2^2+\tau_r^{-2}\int_{\bbr^3}V(x)u_{r,2}^2dx\sim\lambda_{r,2}^{\frac{10-3p}{2(p-2)}}\sim r^2
\end{eqnarray*}
as $r\to0$, where $(u_{r,2})_{\tau_r}=\tau_r^{\frac{3}{2}}u_{r,2}(\tau_r x)$.  Thus, $(u_{r,2})_{\tau_r}\in B_{r\chi,X,1}$ for a fixed and large $\chi>0$.  Since $B_{r\chi,X,1}$ is connected, we can find a continuous path $\Upsilon: [0, 1]$ with $\Upsilon(0)=u_{r,1}$ and $\Upsilon(1)=(u_{r,2})_{\tau_r}$.  Now, we define
\begin{eqnarray*}
h^{**}[s]=\left\{\aligned&\Upsilon[(2s)],\quad 0\leq s\leq\frac12,\\
&(u_{r,2})_{2(1-s)\tau_r+(2s-1)\tau_{r,*}}, \quad\frac12\leq s\leq1,\\
\endaligned\right.
\end{eqnarray*}
where we choose $\tau_{r,*}>1$ such that $\mathcal{T}_{u_{r,2}}(\tau_{r,*})<\mathcal{Y}(u_{r,1})$.  Note that $\mathcal{Y}(u)\lesssim r^2$ in $B_{r\chi,X,1}$ and $\mathcal{Y}(u_{r,2})\gtrsim1$ by \eqref{eq0099} and \eqref{eq0098}. Thus, for $r>0$ sufficiently small, $h^{**}[s]\in\Gamma_r$ and
\begin{eqnarray*}
\alpha(r)\leq\max_{0\leq s\leq1}h^{**}[s]=\mathcal{T}_{u_{r,2}}(1)=\mathfrak{m}(\lambda_{r,2})\lambda_{r,2}^{\frac{6-p}{2(p-2)}}-\frac{\lambda_{r,2}r^2}{2}.
\end{eqnarray*}
Therefore, $\mathfrak{m}(\lambda_{r,2})\lambda_{r,2}^{\frac{6-p}{2(p-2)}}-\frac{\lambda_{r,2}r^2}{2}=\alpha(r)$ and $u_{r,2}$ is a mountain-pass solution of \eqref{eq0066} for $r>0$ sufficiently small.
\hfill$\Box$

\section{Acknowledgements}
The research of J. Wei is
partially supported by NSERC of Canada and the research of Y. Wu is supported by NSFC (No. 11971339, 12171470).

\end{document}